\newcommand{\la}{\langle}
\newcommand{\ra}{\rangle}
\newcommand{\pr}{\partial}
\newcommand{\dom}{\Omega}
\newcommand{\sig}{\Sigma}
\newcommand{\gam}{\Gamma}
\newcommand{\R}{\mathbb{R}}
\newcommand{\supp}{\mbox{supp\;}}
\newcommand{\hb}{H^{1/2}_{co}}
\newcommand{\hbi}{H^{-1/2}_{co}}
\newcommand{\DN}{{Dirichlet-to-Neumann }}
\newcommand{\dn}[2]{{\Lambda^{#1}_{#2}\;}}
\newcommand{\Ci}{C^{(1)}}
\newcommand{\Cii}{C^{(2)}}
\newcommand{\Cz}{{C^0}}
\newtheorem{thm}{Theorem}[section]
\newtheorem{prop}{Proposition}[section]
\newtheorem{lem}{Lemma}[section]
\newtheorem{remark}{Remark}[section]
\title{Uniqueness in the inverse boundary value problem for piecewise homogeneous anisotropic elasticity}
\author{C\u at\u alin I. C\^arstea\thanks{National Center for Theoretical Sciences, Taipei 10617, Taiwan} \and Naofumi Honda\thanks{Hokkaido University, Sapporo  060-0808, Japan} \and  Gen Nakamura\thanks{Hokkaido University, Sapporo  060-0808, Japan}
}
\date{}
\begin{document}

\maketitle

\begin{abstract}
Consider a three dimensional piecewise homogeneous anisotropic elastic medium $\Omega$ which is a bounded domain consisting of a finite number of bounded  subdomains $D_\alpha$, with each $D_\alpha$  a homogeneous elastic medium. One typical example is a finite element model with elements with curvilinear interfaces for an ansiotropic elastic medium. Assuming the $D_\alpha$ are known and Lipschitz, we are concerned with the uniqueness in the inverse boundary value problem of identifying the anisotropic elasticity tensor on  $\Omega$ from a localized Dirichlet to Neumann map given on a part of the boundary $\partial D_{\alpha_0}\cap\partial\Omega$ of $\partial\Omega$ for a single $\alpha_0$, where $\partial D_{\alpha_0}$ denotes the boundary of $ D_{\alpha_0}$. If we can connect each $D_\alpha$ to $D_{\alpha_0}$ by a chain of $\{D_{\alpha_i}\}_{i=1}^n$ such that interfaces between adjacent regions contain a curved portion, we obtain global uniqueness for this inverse boundary value problem.  If the $D_\alpha$ are not known but are subanalytic subsets of $\R^3$ with curved boundaries, then we also obtain global uniqueness. 
\end{abstract}

{\bf Keywords.} Inverse boundary value problem; uniqueness; anisotropic elasticity.\\

{\bf MSC(2000): } 35J57; 65M32; 75B05.

\section{Introduction}

Generally, an inverse boundary value problem considers the question of determining the interior physical properties of a medium from measurements made on the boundary of that medium. Such a problem, which is of clear practical interest, has received a lot of attention from the mathematics community since the publishing of the seminal paper \cite{Calderon} on the inverse conductivity problem. 

One important aspect of inverse boundary value problems is the uniqueness problem, namely the question of whether the  measurements on the boundary (all possible measurements or a proper subset of them) can determine the interior material properties of the medium uniquely. In this paper we investigate this question for the case of three dimensional anisotropic elastic media which are piecewise homogeneous. We prove that, under some restrictions on the geometry of the interfaces between the homogeneous pieces of the medium, many measurements done on part of the boundary uniquely determine the elastic tensor in the interior for the following two cases. One is for known Lipschitz smooth interfaces with curved portions. The other is for unknown homogeneous pieces which are subanalytic sets with curved boundaries. We may see our case as a very natural inverse problem for a finite element model with curvilinear polyhedral grids of an anisotropic elastic medium to identify its elastic properties by many measurements on a part of its boundary. Further, we would like to emphasize that we don't need to know the symmetry axis or symmetry planes for the piecewise homogeneous anisotropic media. Indeed, we do not need to make any symmetry assumption.

Usually, for the former case, the argument for determining the material coefficients consists of two steps. The first is to identify the coefficients at the boundary, and the second to determine the coefficients in the interior. For convenience we refer to them as {\it boundary determination} and {\it interior determination}. There are numerous other examples in the literature of works addressing these topics. Boundary determination has been studied extensively, for anisotopic elasticity past works include \cite{nakamura-tanuma2}, \cite{nakamura-tanuma-uhlmann}, \cite{tanuma}. Interior determination  has also been studied for various equations for example in
\cite{Alessandrini-de_Hoop-Gaburro}, \cite{Alessandrini-Vessella}, \cite{MR3035477}, \cite{MR2832161}, \cite{MR3291119}, \cite{MR3295939}, which consider piecewise problems with piecewise constant material coefficients with a set up very similar to our own. \cite{MR3291119} and \cite{MR3295939} consider the problem of elasticity, but in the isotropic case. We remark that the argument works well even in the case the interfaces with curved portions are boundaries of subanalytic sets.

The latter case can also be handled using boundary determination and interior determination if for given two piecewise homogeneous anisotropic media, we can have a common regions of homogeneity such that each of their interfaces has a curved portion. In \cite{Kohn-Vogelius2} the authors proved the existence of common regions for the conductivity equation with piecewise analytic isotropic conductivity, which was only given in the two dimensional case for ease of exposition. Although the existence of common regions seems true, we have not observed any proof so far for the higher dimensional case. We used the theory of subanalytic sets to prove the existence of common regions for the higher dimensional case.

%This is indeed possible for the latter case using the theory of semianlytic sets.

The proofs of our main results in the aforementioned two cases use the boundary determination (cf. Proposition \ref{bd-thm}) and interior determination (cf. Proposition \ref{inner extension}). In the former case these two are enough, but in the latter case we need to give the existence of aforementioned common regions for given two piecewise homogeneous anisotropic media. The idea
for the boundary determination is to use the link between the surface impedance tensor and the fundamental solution of anisotropic homogeneous elastic equations (cf. \cite{nakamura-tanuma}, \cite{tanuma}).  Concerning the former case, in a series of recent papers 
(\cite{Alessandrini-de_Hoop-Gaburro}, \cite{Alessandrini-Kim}, \cite{Alessandrini-Vessella}, \cite{MR3035477}, \cite{MR2832161}, \cite{MR3291119}, \cite{MR3295939}) a method using Green functions (or singular solutions similar to these) is employed for the purpose of proving interior uniqueness. However, for elliptic systems, Green functions have not yet been well studied and it is hard to use Green functions for proving the interior determination. To overcome this difficulty we we have chosen to  adapt an inner extension of the DN map argument given in \cite{ikehata} to our case. 

For the proof of the existence of the common regions, we use the theory of subanalytic sets. Once having the common regions $\{\tilde{D}_\gamma\}$ such that $\Sigma\subset \partial \tilde{D}_{\gamma_0}\cap\partial\Omega$, then for any $\tilde{D}_\gamma$ with $\gamma\not=\gamma_0$ we can connect $\tilde{D}_\gamma$ by a chain $\{\tilde{D}_{\gamma_j}\}_{j=1}^N$ with $\tilde{D}_{\gamma_1}=\tilde{D}_{\gamma_0}$, $\tilde{D}_{\gamma_N}=\tilde{D}_{\gamma}$ and take a path going through this chain avoiding any singularities of $\tilde{D}_{\gamma_j}$, $j=1,\cdots, N$. The previous argument for the former case using the boundary determination and interior determination can be applied along a tubular neighborhood of this path.

We would like to mention that we were particularly motivated  by \cite{Alessandrini-de_Hoop-Gaburro} for the boundary determination, \cite{ikehata} for the inner extension of \DN  map (cf. Proposition \ref{inner extension}) and \cite{Kohn-Vogelius2} for the common regions.

The rest of this paper is organized as follows. In Section \ref{section-setup} we present the exact setup and the main results of this paper. 
Section \ref{section-boundary} is devoted to the proof of
Proposition \ref{bd-thm}. Then in Section \ref{section-proofs} we give the proof of Theorems \ref{main-thm} and \ref{main-thm-2} by proving Proposition \ref{inner extension} and combining it with Proposition \ref{bd-thm}.

\section{Setup and main results}\label{section-setup}

Let $\dom\subset\R^3$ be a  open bounded connected domain. We consider an elastic tensor $C=C(x)=(C_{ijkl}(x))$ defined for $x=(x_1,x_2,x_3)\in\overline\dom$ with real valued functions $C_{ijkl}(x)$ defined on $\overline\Omega$, which satisfies the symmetries
\begin{equation}
C_{ijkl}(x)=C_{ijlk}(x),\quad C_{ijkl}(x)=C_{klij}(x),\quad x\in\overline\Omega,\,\, i,j,k,l\in\{1,2,3\},
\end{equation}
and the strong convexity condition, i.e. there exists $\lambda>0$ such that for any symmetric matrix $\epsilon=(\epsilon_{ij})$,
\begin{equation}
\epsilon:(C::\epsilon)=\sum_{i,j,k,l=1}^3 C_{ijkl}(x)\epsilon_{ij}\epsilon_{kl}\ge \lambda (\epsilon:\epsilon),\,\,x\in\overline\Omega,
\end{equation}
where $\epsilon:\eta$ is the inner product of matrices $\epsilon$ and $\eta=(\eta_{ij})$ defined by
$\epsilon:\eta=\sum_{i,j=1}^3\epsilon_{ij}\eta_{ij}$, and $C::\eta$ is a matrix whose $(i,j)$ component $(C::\eta)_{ij}$ is defined by
$(C::\eta)_{ij}=\sum_{k,l=1}^3 C_{ijkl}(x)\eta_{kl}$. 

We further assume that $C$ is piecewise homogeneous, that is that there are a finite number of open, connected,  subdomains $D_\alpha$, $\alpha\in A$, such that $\bar\dom=\cup_{\alpha\in A}\bar D_\alpha$, $D_\alpha\cap D_\beta=\emptyset$ if $\alpha\neq\beta$, and $C$ is constant on each $D_\alpha$.
In the rest of this paper all
elastic tensors are assumed to satisfy the symmetry, strong convexity condition and piecewise homogeneity.

Below we will consider two cases: 
\begin{itemize}
\item[{\it i)}]  $\dom$ and each $D_\alpha$ are Lipschitz domains. 
\item[{\it ii)}] $\dom$ and each $D_\alpha$ are open subanalytic subsets of $\R^3$. 
\end{itemize}

The second case requires some elaboration: Let us briefly recall the definition of a subanalytic set and its major properties. 
We first recall the one of a semi-analytic set. Let $X$ be a real analytic submanifold.
A set $A\subset X$ is semi-analytic if for any $x\in \overline{A}$ (here $\overline{A}$
denotes the closure of $A$) there exists an open neighborhood $U$ of $x$ in $X$ and finitely many real-analytic functions $f_{ij}:U\to\R$, $i=1,\ldots,p$, $j=1,\ldots,q$,  such that
\begin{equation}
A\cap U=\bigcup_{i=1}^p\bigcap_{j=1}^q \{x\in U:f_{ij}(x) \ast_{ij} 0\},
\end{equation}
where the relationships $\ast_{ij}$ are either ``$>$'' or ``$=$''. A good reference for semi-analytic sets is \cite{Bierstone-Milman}.
  For example, a finite union of linear or curved polyhedra in $\R^n$, whose boundaries are level sets of real-analytic functions, is a semi-analytic set.

Now we introduce the notion of a subanalytic set, which is just obtained
in the above definition
by replacing subsets determined by inequalities 
with the ones of images of analytic maps.
That is, $A$ is said to be subanalytic if for any $x \in \overline{A}$ there exist
an open neighborhood $U$ of $x$, real analytic compact manifolds $Y_{i,j}$,
 $i=1,2,\,1\le j\le N$ and real
analytic maps $\Phi_{i,j}:Y_{i,j}\rightarrow X$ such that
\begin{equation}
A\cap U=\bigcup_{j=1}^N(\Phi_{1,j}(Y_{1,j})\setminus\Phi_{2,j}(Y_{2,j}))\,\, \bigcap\,\, U
\end{equation}
Reference is made to \cite{Bierstone-Milman} and \cite{Kashiwara-Schapira},
where we can find all the required proofs for properties stated below: A family
of subanalytic sets is stable under several set theoretical operations.
Note that, by definition, a semi-analytic subset is subanalytic.
\begin{enumerate}
\item A finite union and a finite intersection of subanalytic subsets are subanalytic.
\item The closure, interior and complement of a subanalytic subset
are again subanalytic. In particular, its boundary is subanalytic.
\item The inverse image of a subanalytic set by an analytic map is subanalytic.
Further, the direct image of a subanalytic set by a proper analytic map
is also subanalytic.
\end{enumerate}

The other important properties needed in this paper 
are the following  ``finiteness property'' and ``triangulation theorem'' of a subanalytic set.
\begin{lem}[Theorem 3.14 \cite{Bierstone-Milman}]{\label{lem:finiteness}}
	Each connected component of a subanalytic set is subanalytic.
Furthermore, connected components of a subanalytic set is locally finite, that is,
for any compact subset $K$ and a subanalytic subset $A$,
the number of connected components of $A$ intersecting $K$ is finite.
\end{lem}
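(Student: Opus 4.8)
The statement is one of the standard structural consequences of the theory of subanalytic sets, and the shortest argument is simply to invoke the cited reference; below I describe how I would reconstruct it from the other basic results of the theory. The plan is to reduce everything to the existence of a \emph{locally finite subanalytic stratification} of $A$: a partition $A=\bigsqcup_{i\in I}S_i$ into connected subanalytic $C^1$-submanifolds $S_i$ such that the family $\{S_i\}_{i\in I}$ is locally finite and satisfies the frontier condition, i.e.\ $S_i\cap\overline{S_j}\neq\emptyset$ with $i\neq j$ forces $S_i\subseteq\overline{S_j}$. I would take this stratification theorem (see \cite{Bierstone-Milman}, \cite{Kashiwara-Schapira}) as the single black box; the remaining steps are elementary point-set topology.

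Granting such a stratification, the first observation is that if $S_i\subseteq\overline{S_j}$ then $S_i\cup S_j$ is connected, since it is squeezed between the connected set $S_j$ and its closure. On the index set $I$ declare $i$ and $j$ adjacent if $S_i\subseteq\overline{S_j}$ or $S_j\subseteq\overline{S_i}$ (by the frontier condition this is the same as $S_i\cap\overline{S_j}\neq\emptyset$ or $S_j\cap\overline{S_i}\neq\emptyset$), and let $\approx$ be the equivalence relation it generates. For an $\approx$-class $E$ put $A_E:=\bigcup_{i\in E}S_i$; these sets partition $A$. I would show: (a) $A_E$ is connected, by joining two of its strata through a finite chain of adjacent strata and using the squeeze observation on consecutive (overlapping) members; (b) $A_E$ is open in $A$, because local finiteness lets one choose a neighbourhood of any $x\in S_i\subseteq A_E$ meeting only strata whose closure contains $x$, and for each such stratum $S_j$ one has $x\in S_i\cap\overline{S_j}$, hence $i\approx j$ by the frontier condition; (c) $A_E$ is closed in $A$, since $A\setminus A_E$ is the union of the remaining (open) sets $A_{E'}$. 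Being connected and relatively clopen, each $A_E$ is exactly a connected component of $A$.

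The two assertions now follow. For subanalyticity: each $A_E$ is a locally finite union of subanalytic sets, and since subanalyticity is a local condition such a union is subanalytic, so every connected component of $A$ is subanalytic. For local finiteness: given a compact set $K$, cover it by finitely many open sets each meeting only finitely many strata, so that $K$ meets only finitely many $S_i$; since every component of $A$ is a union of strata and a component meeting $K$ must contain a stratum meeting $K$, only finitely many components meet $K$.

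The only real difficulty is hidden in the black box, namely producing (or precisely citing) a locally finite subanalytic stratification into \emph{connected} submanifolds with the frontier property; in particular the local finiteness of the connected components of a subanalytic submanifold is itself a piece of this package and should not be taken for granted. An essentially equivalent route would replace the stratification by the subanalytic triangulation theorem stated below: the connected components of $A$ then correspond to the connected subcomplexes of the triangulating complex, which are locally finite, and their realizations are subanalytic because the triangulating homeomorphism is analytic on each open simplex. This merely trades one structure theorem for another, and I would present the stratification version as the cleaner one.
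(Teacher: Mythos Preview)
The paper does not prove this lemma at all: it is stated as a quotation of Theorem~3.14 in \cite{Bierstone-Milman} and is used as a black box, with no argument given. So there is no ``paper's own proof'' to compare against; your opening remark that the shortest argument is simply to invoke the cited reference is exactly what the authors do.

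As for the reconstruction you offer, the route via a locally finite subanalytic stratification with the frontier condition is a correct and standard way to derive both conclusions, and your point-set topology steps (a)--(c) are fine. You have also correctly identified the one genuine danger: the stratification theorem you take as a black box is, in most developments (including \cite{Bierstone-Milman}), proved \emph{after} and \emph{using} the local finiteness of connected components of subanalytic sets, so invoking it here risks circularity. In Bierstone--Milman the finiteness statement is obtained earlier and more directly, essentially from the uniformization/rectilinearization theorems and properties of semianalytic sets, before any stratification is built. Your alternative via the triangulation theorem (the paper's Lemma~\ref{lemma:triangulation}) has the same issue, since subanalytic triangulation also sits downstream of this result. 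If you want a self-contained write-up rather than a citation, you should either trace the argument back to uniformization as in \cite{Bierstone-Milman}, or be explicit that you are assuming a stratification theorem whose proof is logically independent of the statement at hand.
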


In particular, for two relatively compact subanalytic subsets $A$ and $B$,
the number of connected components of $A \cap B$ is always finite.
Note that one can never expect this finiteness property if we drop subanaliticity of $A$ or $B$
as we will see in the example given at the end of this section.

\begin{lem}[Proposition 8.2.5 \cite{Kashiwara-Schapira}]{\label{lemma:triangulation}}
	Let $X= \underset{\lambda \in \Lambda}{\sqcup} X_\lambda$ be a locally finite partition
	of $X$ by subanalytic subsets. Then there exist a simplicial complex
	$\mathbf{S} = (S,\Delta)$ and a homeomorphism $i: |\mathbf{S}| \to X$ such that
\begin{enumerate}
	\item for any simplex $\sigma \in \Delta$, the image $\hat{\sigma} := i(|\sigma|)$ is subanalytic in $X$
		and real analytic smooth at every point in $\hat{\sigma}$.
\item for any simplex $\sigma \in \Delta$, there exists $\lambda \in \Lambda$
	with $i(|\sigma|) \subset X_\lambda$.
\end{enumerate}
\end{lem}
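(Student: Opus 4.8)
The plan is to derive the statement from two more basic structural facts about subanalytic sets, both of them consequences of the finiteness properties in Lemma~\ref{lem:finiteness} together with the curve selection lemma and the \L{}ojasiewicz inequalities: (a) a \emph{stratification theorem}, that any locally finite subanalytic partition of $X$ refines to a locally finite partition $X=\bigsqcup_\mu S_\mu$ into connected real-analytic submanifolds satisfying the frontier condition (for each $\mu$, the set $\overline{S_\mu}\setminus S_\mu$ is a union of strata of strictly smaller dimension); and (b) stability of the subanalytic category under images by proper analytic maps, preimages, graphs, and the associated branch loci of projections. Using (b), one first reduces to the case in which $\Lambda$ is finite and $X$ is relatively compact — in the general locally finite situation, exhaust $X$ by relatively compact subanalytic opens and patch the resulting triangulations using local finiteness — and then embeds $X$ as a relatively compact subanalytic subset of some $\R^N$, so that $X$ may be taken compact. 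Thus it suffices to triangulate a compact subanalytic set $X\subset\R^N$ compatibly with a given finite subanalytic partition.

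I would prove this by induction on the ambient dimension $N$, the cases $N=0,1$ being elementary. In the inductive step, first refine the partition to a subanalytic stratification $\{S_\mu\}$ satisfying the frontier condition by invoking (a), so that the pieces are connected real-analytic submanifolds; then, after a generic linear change of coordinates, let $p:\R^N\to\R^{N-1}$ forget the last coordinate. Genericity ensures that the \emph{apparent contour} $Z\subset\R^{N-1}$ — formed from the $p$-images of the strata of dimension $\le N-2$, the critical values of $p$ on the top-dimensional strata, and the locus where the finite set $p^{-1}(t)\cap X$ changes combinatorial type — is subanalytic of dimension $<N-1$, by (b) and a dimension count. Applying the inductive hypothesis in $\R^{N-1}$ to the finite subanalytic family consisting of $Z$, of $p(X)$, and of $p$ of each stratum, one obtains a simplicial complex $\mathbf{T}$ with $|\mathbf{T}|=p(X)$ such that each open simplex $\tau$ is mapped analytically and lies either inside $Z$ or disjoint from it. Over each $\tau$ disjoint from $Z$ the fibre structure is locally constant: there are continuous subanalytic functions $\xi_1<\dots<\xi_k$ on $\tau$ such that $p^{-1}(\tau)\cap X$ is the union of the graphs $\{x_N=\xi_j\}$ and of the bands $\{\xi_j<x_N<\xi_{j+1}\}$ that meet $X$, each graph or band lying entirely in one piece $X_\lambda$ or entirely outside $X$; the $\xi_j$ are analytic on $\tau$ by the analytic implicit function theorem and, by the \L{}ojasiewicz inequality, extend continuously to the closed simplex $\overline\tau$. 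This presents $p^{-1}(\overline\tau)\cap X$ as a subanalytic prism over $\overline\tau$, which one triangulates compatibly with these graphs and bands by coning from barycentres, carried out by a second, downward induction on $\dim\tau$ so that the subdivisions agree on shared faces. Gluing over all $\tau$ produces the complex $\mathbf{S}$ and the homeomorphism $i$; property~2 in the statement holds because the partition was first replaced by a refinement of $\{X_\lambda\}$, and property~1 because each $i(|\sigma|)$ is cut out of $X$ by finitely many subanalytic conditions — equalities $p=t$ and comparisons between $x_N$ and the $\xi_j$ — with the $\xi_j$ analytic over the open simplices.

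The main obstacle is exactly this inductive core: arranging a single projection $p$ whose apparent contour $Z$ is simultaneously a proper subanalytic subset and controls the combinatorial type of the fibres, and then proving that the sheet functions $\xi_j$ extend \emph{continuously} up to the closed simplices, so that the prism triangulations glue across faces. Both points rest on the curve selection lemma and the \L{}ojasiewicz inequality for subanalytic functions, and deriving those quantitative estimates from scratch is the substantial part of the argument — which is why the result is invoked here as a citation to \cite{Kashiwara-Schapira} (see also \cite{Bierstone-Milman}) rather than reproved. The remaining ingredients — the reductions, the bookkeeping of simplices, and the verification of properties 1 and 2 — are routine once the stratification theorem and this prism (``cylindrical'') lemma are in hand.
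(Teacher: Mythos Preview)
The paper does not prove this lemma at all: it is stated as a citation of Proposition~8.2.5 in \cite{Kashiwara-Schapira}, with the remark that ``we can find all the required proofs'' in \cite{Bierstone-Milman} and \cite{Kashiwara-Schapira}. There is therefore nothing in the paper to compare your argument against beyond the bare reference.

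That said, your sketch is a faithful outline of the standard approach to the subanalytic triangulation theorem (essentially the Hardt--Hironaka--\L{}ojasiewicz argument via cylindrical decomposition): reduce to a compact set in $\R^N$, stratify, project generically to $\R^{N-1}$, control the apparent contour, apply induction, and use \L{}ojasiewicz-type estimates to extend the sheet functions continuously so that the prism triangulations glue. Your own caveat is the right one: the hard analytic content sits in the stratification theorem, the curve selection lemma, and the continuity of the $\xi_j$ up to closed simplices, and you are explicitly deferring those to the cited references rather than reproving them. As a proof \emph{sketch} this is sound and in fact supplies considerably more than the paper does; as a self-contained proof it would still need those ingredients filled in, which is precisely why the paper (and you) point to \cite{Kashiwara-Schapira} and \cite{Bierstone-Milman}.
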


In order to define local boundary measurements we consider $\Sigma\subset\pr\dom$ an open subset of the boundary. In the context of case {\it ii)} we  will take $\Sigma$ to be smooth.

Let $\hb(\sig)$ be the closure in $H^{1/2}(\sig;\R^3)$ of the set
\begin{equation}
C_{co}(\sig)=\{\omega\in C(\pr\dom;\R^3)\;:\;\supp\omega\subset\sig\},
\end{equation}
and $\hbi(\sig)$ be its topological dual. Further, let $\tilde\dom$ be an open bounded Lipschitz (in case {\it i)}) or subanalytic (in case {\it ii)}) domain such that $\dom\subset\tilde\dom$, $\pr\dom\setminus\pr\tilde\dom=\sig$. We define 
\begin{equation}
H^1_\Sigma(\dom)=\left\{ \phi\in H^1(\dom): \exists \tilde\phi\in H^1_0(\tilde\dom), \tilde\phi|_{\dom}=\phi\right\}
\end{equation}
%NOTE: by the proof of G&T Thm. 7.25 we can extend H^1 functions past a C^{0,1} boundary, so this should be well defined; not inclined to go into these kind of details but should be able to make this more precise 

Our single measurement on $\Sigma$ is a pair 
\begin{equation}(\omega, (C::Du)n|_\Sigma)\in \hb(\sig)\times\hbi(\sig)
,\end{equation}
 where $n$ is the outer unit normal of $\partial\Omega$, $D=(\partial_1,\partial_2,\partial_3)^t$ formally considered as a column vector, $Du$ is the matrix $Du=(\partial_l u_k)=(\partial u_k/\partial x_l)$, and the column vector $u=(u_1,u_2,u_3)\in H^1(\Omega)$ is the solution of
\begin{equation}
L_Cu=0\quad \text{in }\dom,\quad
u|_{\pr\dom}=\omega,
\end{equation}
where 
\begin{equation}
(L_Cu)_i:=\big(\text{div}(C::Du)\big)_i:=\sum_{j=1}^3\pr_j\left( C_{ijkl}(x)\pr_l u_k\right)=0,\quad i=1,2,3,
\end{equation}
and we take the boundary condition to mean that $u-\tilde\omega\in H_0^1(\dom)$ for any $\tilde\omega\in H^1_\sig(\dom)$ such that $\tilde\omega|_{\sig}=\omega$.
The pair $(\omega, (C::Du)n|_\Sigma)$ is a Cauchy data on $\Sigma$ for the equation $L_C u=0$ in $\Omega$.
%NOTE: the question of the existence of solutions may come up; this should again be ok by showing H^1_\sig is a complete (Hilbert) space using an extension result in the style of G&T Thm. 7.25

We will use infinitely many pairs of Cauchy data on $\Sigma$, namely the so called \DN map (DN map) $\dn{\sig}{C}:\hb(\sig)\to\hbi(\sig)$.
The precise definition of DN map $\dn{\sig}{C}$ is given as
\begin{equation}
\la\dn{\sig}{C}\omega,\phi\ra:=\int_\dom Du:(C::Dv)=
 \int_\dom \displaystyle\sum_{i,j,k,l=1}^3 C_{ijkl}(x)\pr_k u_l\pr_i v_j,
\end{equation}
for any $v\in H^1_\sig(\dom)$ such that $v|_{\sig}=\phi$.
Another way to put it is that, if the column vector $\vec n=(n_1,n_2,n_3)^t$ is the outer normal to the surface $\sig$, then
\begin{equation}
(\dn{\sig}{C}\omega)_i=[(C::Du)n]_i=\sum_{i,j,k,l}^3 C_{ijkl}(x)n_j\pr_k u_l|_{\sig},\quad i=1,2,3.
\end{equation}

In case {\it i)} we will consider  $C^{(I)}$, $I=1,2$, two elastic tensors which are constant on common subdomains $D_\alpha$. We assume that there is a chain $D_{\alpha_i}$, $i=1,\ldots,N$ of these subdomains (which we will abbreviate as $D_i$) and nonempty surfaces $\gam_i\subset\pr D_i$ such that $\gam_1=\sig$, and $\bar D_i\cap\bar D_{i+1}\supset\gam_{i+1}$, $i=1,\ldots,N-1$. 

If  $\gam\subset\partial D_\alpha$ is open, let $\vec n: \gam\to S^2$ be the outer normal. We will say that $\gam$ is curved if $\vec n(\gam)\subset S^2$ contains the image of a continuous curve going through two distinct points. Our main result in this case is:

\begin{thm}\label{main-thm}
Suppose that $\dn{\sig}{\Ci}=\dn{\sig}{\Cii}$, additionally suppose that each of the surfaces $\gam_i$, $i=1,\ldots, N$ is  curved, in the sense given above. Under this assumptions it follows that
$
\Ci|_{D_N}=\Cii|_{D_N}.
$
Hence if each $D_\alpha$ can be reached by such a chain,  then we have
$
\Ci=\Cii.
$
\end{thm}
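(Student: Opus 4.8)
The plan is to run a chain/propagation argument along $D_1,\dots,D_N$, using boundary determination (Proposition \ref{bd-thm}) as the base case and interior determination via the inner extension of the DN map (Proposition \ref{inner extension}) to pass from one link of the chain to the next. The key quantitative input is that each interface $\gam_i$ is \emph{curved}: since the normal map $\vec n:\gam_i\to S^2$ sweeps out the image of a continuous curve through two distinct points, the elastic tensor on the two constant pieces adjacent to $\gam_i$, once we know the surface impedance tensor (equivalently, the Cauchy data of singular/exponential solutions) on an open piece of $\gam_i$, must be equal — this is exactly the point where boundary determination for anisotropic elasticity (built from the link between the surface impedance tensor and the fundamental solution, cf. \cite{nakamura-tanuma}, \cite{tanuma}) forces uniqueness of \emph{all} the constants $C_{ijkl}$ rather than just those entering a single normal direction.

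First I would establish the base step: since $\Lambda^{\sig}_{\Ci}=\Lambda^{\sig}_{\Cii}$ and $\gam_1=\sig\subset\pr D_1\cap\pr\dom$ is curved, Proposition \ref{bd-thm} applied on $\sig$ gives $\Ci|_{D_1}=\Cii|_{D_1}$. Next comes the inductive step: assume $\Ci|_{D_i}=\Cii|_{D_i}$; I want $\Ci|_{D_{i+1}}=\Cii|_{D_{i+1}}$. Because the two tensors agree on $D_i$, the solutions of $L_{\Ci}u=0$ and $L_{\Cii}u=0$ coincide on $D_i$ for matching data, so the equality of DN maps on $\sig$ propagates across $D_i$ to an equality of ``local DN maps'' on the interface $\gam_{i+1}\subset\bar D_i\cap\bar D_{i+1}$. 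This is precisely what Proposition \ref{inner extension} (the inner extension of the DN map argument adapted from \cite{ikehata}) is designed to deliver: it lets us transport the Cauchy data known on $\sig$ to Cauchy data on a surface $\gam_{i+1}$ interior to $\dom$, for the equation on the region on the far side of $\gam_{i+1}$. One should be careful here to set up the auxiliary enlarged domain $\tilde\dom$ and the spaces $H^1_{\gam_{i+1}}$, $\hb(\gam_{i+1})$ relative to $D_{i+1}$ exactly as in Section \ref{section-setup}, and to check that $\gam_{i+1}$ being nonempty and open (and curved) is enough to run Proposition \ref{bd-thm} with $\sig$ replaced by $\gam_{i+1}$ and $\dom$ replaced by $D_{i+1}$.

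Having the equality of the boundary measurements on the curved open piece $\gam_{i+1}$ of $\pr D_{i+1}$, I would invoke boundary determination one more time — now \emph{internally}, treating $D_{i+1}$ as a homogeneous elastic body with the relevant Cauchy data given on $\gam_{i+1}$ — to conclude $\Ci|_{D_{i+1}}=\Cii|_{D_{i+1}}$. This closes the induction, giving $\Ci|_{D_N}=\Cii|_{D_N}$; and since by hypothesis every $D_\alpha$ is the terminal node of some such chain starting at $\sig$, running the argument over all chains yields $\Ci=\Cii$ on all of $\dom$.

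The main obstacle, I expect, is the inductive step's reliance on Proposition \ref{inner extension}: one must show that knowing the DN map on $\sig$ together with $\Ci=\Cii$ on the already-treated part of the chain genuinely produces the \emph{full} Cauchy data (both Dirichlet and Neumann traces of a sufficiently rich family of solutions) on the interface $\gam_{i+1}$, with no loss of information, despite only low (Lipschitz) regularity of the interfaces and the duality-pairing definition of the Neumann trace in $\hbi$. A secondary subtlety is verifying that ``curved'' in the sense of the normal-image condition is exactly the hypothesis under which Proposition \ref{bd-thm} upgrades knowledge of the surface impedance tensor to uniqueness of the entire anisotropic tensor — i.e. that a curve of distinct normal directions is enough to pin down all $C_{ijkl}$ with no symmetry assumption; this is where the fundamental-solution representation of the surface impedance tensor does the heavy lifting.
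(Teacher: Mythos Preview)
Your proposal is correct and follows essentially the same approach as the paper: induction along the chain, alternating boundary determination (Proposition \ref{bd-thm}) on each curved $\gam_i$ with the inner extension of the DN map (Proposition \ref{inner extension}) to propagate equality of DN maps to the next interface. One minor clarification: the DN map produced on $\gam_{i+1}$ is for the \emph{remaining} domain $\Omega_{i+1}=\Omega\setminus\bigcup_{j\le i}\bar D_j$, not for $D_{i+1}$ alone, but Proposition \ref{bd-thm} still applies since it is local and $C$ is constant on the $D_{i+1}$-side of $\gam_{i+1}$.
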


In case {\it ii)} we will consider $C^{(I)}$, $I=1,2$, two elastic tensors which are constant on  subdomains $D_\alpha^{(I)}$. As in case {\it i)}, we need to impose some curvature condition on the boundaries of the subdomains $D_\alpha^{(I)}$. We will require that all the boundaries $\pr D^{(I)}_\alpha$ contain no open flat subsets. It will then follow that any open and smooth $\gam\subset \pr D^{(I)}_\alpha$ will automatically satisfy the curvature condition defined above. Our main result in this case is:
\begin{thm}\label{main-thm-2}
Under the above conditions, if $\dn{\sig}{\Ci}=\dn{\sig}{\Cii}$, then $\Ci=\Cii.$
\end{thm}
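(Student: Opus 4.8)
The plan is to run the chain argument behind Theorem \ref{main-thm} after replacing the two (a priori different) partitions by a single \emph{common} one on which \emph{both} tensors are constant. To produce it, write $\{D^{(I)}_\alpha\}$ for the subanalytic partition on which $C^{(I)}$ is constant, $I=1,2$. Each $D^{(1)}_\alpha\cap D^{(2)}_\beta$ is a relatively compact subanalytic set, so by Lemma \ref{lem:finiteness} it has finitely many connected components, each again subanalytic; taking $\{\tilde D_\gamma\}_{\gamma\in\Gamma}$ to be the finite family of interiors of the closures of all these components, we obtain pairwise disjoint open subanalytic sets with $\overline\dom=\bigcup_{\gamma}\overline{\tilde D_\gamma}$, on each of which both $\Ci$ and $\Cii$ are constant, and with every $\partial\tilde D_\gamma$ contained in the union of two of the $\partial D^{(I)}_\alpha$ --- hence containing no open flat subset.

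Next I would extract the combinatorial and smoothness structure of this partition. Applying the triangulation theorem (Lemma \ref{lemma:triangulation}) to the locally finite subanalytic partition of $\overline\dom$ that refines $\{\tilde D_\gamma\}$, $\partial\dom$, $\Sigma$, and the singular loci of the $\partial\tilde D_\gamma$ produces a compatible simplicial decomposition. Let $Z\subset\overline\dom$ be the union of the images of all its simplices of dimension $\le1$ that lie in some $\partial\tilde D_\gamma$; this is subanalytic of dimension $\le1$, so $\dom\setminus Z$ is connected and in $\overline\dom\setminus Z$ two distinct regions $\tilde D_\gamma$ and $\tilde D_{\gamma'}$ can meet only along real-analytic smooth $2$-dimensional interfaces. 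Consequently the adjacency graph on $\Gamma$ --- with $\gamma\sim\gamma'$ whenever $\overline{\tilde D_\gamma}\cap\overline{\tilde D_{\gamma'}}$ contains such a $2$-dimensional piece --- is connected. After shrinking $\Sigma$, which costs nothing since the DN map on a smaller surface is determined by the one on $\Sigma$ and since the points of $\Sigma$ not contained in $\partial\tilde D_\gamma$ for a single $\gamma$ form a lower-dimensional subanalytic set, I may assume $\Sigma$ is a connected smooth surface contained in $\partial\tilde D_{\gamma_0}$ for a single $\gamma_0$. Finally, the curvature condition holds automatically on every surface that will be used: $\Sigma$ and every smooth $2$-dimensional interface above is an open smooth subset of some $\partial D^{(I)}_\alpha$, which by hypothesis has no open flat subset, so on such a connected surface the Gauss map is nonconstant on every open subset; its image is then a connected subset of $S^2$ with more than one point, hence contains the image of a continuous curve through two distinct points.

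With these preparations the propagation proceeds exactly as in the proof of Theorem \ref{main-thm}. Fix $\gamma\in\Gamma$; by connectedness of the adjacency graph choose a chain $\gamma_0,\gamma_1,\dots,\gamma_N=\gamma$ with curved smooth interfaces $\gam_{j+1}\subset\overline{\tilde D_{\gamma_j}}\cap\overline{\tilde D_{\gamma_{j+1}}}$, and a path from $\Sigma$ running successively through $\tilde D_{\gamma_0},\dots,\tilde D_{\gamma_N}$, crossing the $\gam_{j+1}$ and avoiding $Z$; a thin tubular neighborhood of this path meets only $\tilde D_{\gamma_0},\dots,\tilde D_{\gamma_N}$ and reproduces locally the configuration of case \emph{i)}. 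Boundary determination (Proposition \ref{bd-thm}) at the curved surface $\Sigma\subset\partial\tilde D_{\gamma_0}$ gives $\Ci|_{\tilde D_{\gamma_0}}=\Cii|_{\tilde D_{\gamma_0}}$; the inner extension of the DN map (Proposition \ref{inner extension}) then transfers the hypothesis $\dn{\sig}{\Ci}=\dn{\sig}{\Cii}$ to equality of the DN maps on $\gam_1$ for the two problems obtained by removing $\tilde D_{\gamma_0}$; boundary determination at $\gam_1$ gives $\Ci|_{\tilde D_{\gamma_1}}=\Cii|_{\tilde D_{\gamma_1}}$; and repeating this pair of steps along the chain yields $\Ci|_{\tilde D_{\gamma}}=\Cii|_{\tilde D_{\gamma}}$ after $N$ iterations. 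Since $\gamma\in\Gamma$ was arbitrary and $\overline\dom=\bigcup_\gamma\overline{\tilde D_\gamma}$, we conclude $\Ci=\Cii$.

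I expect the main obstacle to be the geometric bookkeeping underlying the last two paragraphs: one must be sure that every common region is reachable from $\tilde D_{\gamma_0}$ by a chain whose consecutive interfaces are simultaneously $2$-dimensional, real-analytic smooth, and curved, while the path realizing the chain stays off the codimension-$\ge2$ singular strata $Z$, so that Propositions \ref{bd-thm} and \ref{inner extension} --- which are local statements across a single smooth curved interface --- apply without modification. The subanalytic finiteness property (Lemma \ref{lem:finiteness}) and the triangulation theorem (Lemma \ref{lemma:triangulation}) are precisely the tools that make this bookkeeping possible; if one drops subanalyticity, even the existence of a \emph{finite} common partition can fail.
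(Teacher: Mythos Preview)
Your proposal is correct and follows essentially the same approach as the paper: form the common refinement $\{\tilde D_\gamma\}$ using subanalytic finiteness (Lemma~\ref{lem:finiteness}), extract smooth curved $2$-dimensional interfaces via the triangulation theorem (Lemma~\ref{lemma:triangulation}) while avoiding the codimension-$\ge 2$ skeleton, and then propagate along a chain by alternating Proposition~\ref{bd-thm} with Proposition~\ref{inner extension} inside a tubular neighborhood of a transversal path. The only cosmetic differences are that the paper takes the connected components of $D^{(1)}_\alpha\cap D^{(2)}_\beta$ directly (they are already open, so passing to interiors of closures is unnecessary and could in principle merge pieces), and that your symbol $\Gamma$ for the index set clashes with the paper's use of $\Gamma$ for the union of interfaces; neither affects the argument.
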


As part of the proof of the above theorems, we will also obtain the following proposition which seems interesting in its own right:

\begin{prop}\label{bd-thm}
If $\dn{\sig}{\Ci}=\dn{\sig}{\Cii}$ and $\sig$ is curved, then
$
\Ci|_{\sig}=\Cii|_{\sig}.
$
\end{prop}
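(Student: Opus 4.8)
The plan is to reduce the boundary determination of the full anisotropic tensor $C$ on $\Sigma$ to a known rigidity fact about the surface impedance tensor. Since each $D_\alpha$ is a homogeneous medium, on the region $D_{\alpha_0}$ whose boundary contains $\Sigma$ the tensor $\Ci|_{D_{\alpha_0}}$ and $\Cii|_{D_{\alpha_0}}$ are constant matrices. First I would localize: equality of the DN maps $\dn{\sig}{\Ci}=\dn{\sig}{\Cii}$ means that all Cauchy data $(\omega,(C::Du)n)$ on $\Sigma$ agree, and by standard boundary-determination machinery (differentiating the quadratic form $\la\dn{\sig}{C}\omega,\phi\ra$ against highly oscillatory boundary data concentrated near a point $x_0\in\Sigma$) one extracts the full symbol of the DN operator at $x_0$. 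In the constant-coefficient half-space model, this symbol is exactly the surface impedance tensor $Z(x_0,\nu,\xi)$ associated to the Stroh/Lekhnitskii formalism, built from the fundamental solution of $L_C$ for the constant tensor $C=C(x_0)$, the direction $\nu=\vec n(x_0)$, and cotangent directions $\xi$. Hence $\dn{\sig}{\Ci}=\dn{\sig}{\Cii}$ forces the two surface impedance tensors to coincide for all $x_0\in\Sigma$ and all tangential $\xi$.

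Next I would invoke the link between the surface impedance tensor and the elasticity tensor, as developed in \cite{nakamura-tanuma}, \cite{tanuma}: the surface impedance tensor $Z(\nu,\xi)$, known for one fixed normal direction $\nu$ and all tangential $\xi$, determines the constant tensor $C$ uniquely — this is precisely the content of the reconstruction formula expressing $C$ (or rather the acoustic tensors $C_{ijkl}\nu_j\nu_l$, $C_{ijkl}\nu_j\tau_l$, $C_{ijkl}\tau_j\tau_l$) in terms of integrals of $Z$ over the unit circle of tangential directions. Applying this at each $x_0\in\Sigma$ gives $\Ci(x_0)=\Cii(x_0)$ pointwise. However, a single fixed normal at a single point only recovers $C$ contracted with that one normal in various ways, which is not yet all $81$ components; this is where the hypothesis that $\Sigma$ is \emph{curved} enters. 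Because $\vec n(\Sigma)\subset S^2$ contains the image of a continuous curve through two distinct points, the normal direction varies, so we obtain the impedance tensor for a whole one-parameter family of directions $\nu$. The remaining step is to check that knowing the (constant) tensor's action associated to a nondegenerate arc of normal directions pins down all components of the constant $C$ — a linear-algebra argument using that the polynomial identities $C_{ijkl}\nu_j\nu_l=$ (given data) for $\nu$ ranging over an arc determine $C$ by polarization, together with the symmetries $C_{ijkl}=C_{jikl}=C_{klij}$.

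Concretely, the key steps in order are: (1) from $\dn{\sig}{\Ci}=\dn{\sig}{\Cii}$ and oscillatory-test-function localization, deduce equality of the principal symbols of the two DN maps at every $x_0\in\Sigma$; (2) identify this symbol with the surface impedance tensor $Z(x_0;\nu(x_0),\xi)$ of the constant tensor $C(x_0)$ via the fundamental-solution representation; (3) use the Nakamura–Tanuma reconstruction to recover, at each $x_0$, all contractions of $C(x_0)$ with the single normal $\nu(x_0)$; (4) let $x_0$ vary along the curved portion of $\Sigma$ so that $\nu(x_0)$ traces a nontrivial arc in $S^2$, and run a polarization/analytic-continuation argument in $\nu$ to conclude $\Ci(x_0)=\Cii(x_0)$, hence $\Ci|_\Sigma=\Cii|_\Sigma$.

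The main obstacle I anticipate is step (2)–(4) interface: carefully justifying that the localized DN symbol really is the half-space surface impedance tensor for the \emph{frozen} constant coefficients — i.e. that the lower-order/curvature contributions from $\partial\Omega$ and from neighboring regions $D_\beta$ do not pollute the leading symbol — and then that the \emph{constancy} of $C$ on $D_{\alpha_0}$ lets a varying-normal family of impedance data over-determine $C$ cleanly. The curvature hypothesis is essential precisely here: without it the normal is (locally) constant and one only recovers a proper subspace of tensor components, so the quantitative claim must be that an arc of normals suffices, which requires showing the relevant linear map from $C$-components to impedance data over the arc is injective.
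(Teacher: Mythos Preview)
Your overall architecture---localize the DN map, identify its symbol with the surface impedance tensor, then use the curvature of $\Sigma$ to recover the full constant tensor---matches the paper's. But steps (3)--(4) as you have written them contain a real gap, and the way the paper closes it is different from your polarization idea.

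First, the Nakamura--Tanuma result you invoke is not a formula expressing the acoustic tensors $C_{ijkl}\nu_j\nu_l$, $C_{ijkl}\nu_j\tau_l$, $C_{ijkl}\tau_j\tau_l$ in terms of integrals of $Z$ over the tangential circle. It is the representation of the \emph{fundamental solution} of the constant-coefficient operator as $\Gamma(x)=(4\pi|x|)^{-1}\bigl(\operatorname{Re} Z(x/|x|)\bigr)^{-1}$; this requires knowing $Z$ on \emph{all} of $S^2$, not on a single great circle. So your step (3), ``recover at each $x_0$ all contractions of $C(x_0)$ with the single normal $\nu(x_0)$,'' is not supported by that citation, and your own next sentence (``a single fixed normal \ldots is not yet all $81$ components'') in fact contradicts the sentence before it. The integral formulas involving $R$ and $T$ go the other way: they express $Z$ in terms of $C$, not conversely.

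Second, and this is the key point you are missing, the paper never tries to reconstruct $C$ from a single great circle's worth of impedance data. Instead it observes that for a \emph{constant} tensor the impedance $Z(\vec l)$ is an intrinsic real-analytic function on $S^2$ (their Lemma~3.1, via the Barnett--Lothe integral formalism). At each $P\in\Sigma$ the DN map determines $Z(\vec l)$ for $\vec l\perp \vec n(P)$, i.e.\ on one great circle; as $P$ ranges over the curved $\Sigma$, these great circles sweep out an open subset $\mathcal{S}\subset S^2$. Real-analytic unique continuation then gives $Z^{(1)}=Z^{(2)}$ on all of $S^2$, and \emph{only then} does the fundamental-solution formula yield $C^{(1)}=C^{(2)}$. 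Your proposed ``polarization/analytic-continuation in $\nu$'' in step (4) is pointing in the right direction---some analyticity is needed---but it is applied to the wrong object: you should be continuing $Z(\cdot)$ on $S^2$, not the map $\nu\mapsto(\text{contractions of }C\text{ with }\nu)$. The analyticity of $Z$ is not obvious (the Stroh eigenvalues can collide) and must be proved via the integral representation; this is exactly the content of the paper's Lemma~3.1, which your outline omits.
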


\begin{remark}
More concise expressions of the anisotropic elasticity tensor exist in the case where there are symmetry axis(es) or plane(s) of symmetry (cf. \cite{ting}). In the paper \cite{nakamura-tanuma-uhlmann} on the determination of transversally isotropic heterogeneous elasticity tensor at the boundary, there are assumptions made that the axis of symmetry is known and it is uniform. What is interesting here is that we do not need to assume any symmetry.
\end{remark} 

\bigskip
Before closing this section we will make some note on the assumption and proof of the second case.
%\subsection{A note on the second case}\label{section-second-case}
%To begin with we give the stratification of semianalytic sets. A stratification of a semianalytic set $A\subset\R^3$ is a locally finite family $A_k$,  of connected, disjoint, semi-analytic subsets of $\R^3$ such that $A=\cup_k A_k$, each $A_k$ is a real-analytic submanifold of $\R^3$ (without boundary), and if $A_k\cap\bar A_l\neq\emptyset$, then $A_k\subset\bar A_l$ and $dim\;A_k<dim\;A_l$. (See \cite[Proposition 2.10]{Bierstone-Milman}.)
Let $\Gamma \subset \overline{\Omega}$ be 
the union of boundaries of $D^{(1)}_\alpha$ and $D^{(2)}_\beta$, i.e.,
\begin{equation}
	\Gamma := \left(\bigcup_{\alpha} \partial D^{(1)}_\alpha\right)\,\,\bigcup\,\,
	\left(\bigcup_{\beta} \partial D^{(2)}_\beta\right),
\end{equation}
and let 
\begin{equation}
	\Omega \setminus \Gamma = \underset{\lambda\in \Lambda}{\sqcup} \tilde{D}_\lambda 
\end{equation}
be the partition of $\Omega \setminus \Gamma$ by 
subanalytic subsets, where each $\tilde{D}_\lambda$ is
a connected component of $D^{(1)}_\alpha \cap D^{(2)}_\beta$ 
for some $\alpha$ and $\beta$. It follows from
Lemma {\ref{lem:finiteness}} that the number of components in this partition is finite,
that is, $\Lambda$ is a finite set.
Furthermore, as the boundary $\partial \tilde{D}_\lambda$ is contained in $\Gamma$,
the smooth surface part of $\partial \tilde{D}_\lambda$ is curved by the same assumptions
for $\partial D^{(1)}_\alpha$ and  $\partial D^{(2)}_\beta$. Note that
both the tensors $\Ci$ and $\Cii$ are constant on each $\tilde{D}_\lambda$.

Let us consider the partition 
\begin{equation}{\label{eq:partition_omega}}
\overline{\Omega} = 
\left(\underset{\lambda\in\Lambda}{\sqcup} \tilde{D}_\lambda\right) \,\sqcup\,
\Gamma
\end{equation}
of $\overline{\Omega}$. Then, by applying Lemma \ref{lemma:triangulation}
to the above partition, we obtain a
triangulation $i:|\mathbf{S}| =|(S,\Delta)| \to \overline{\Omega}$ of $\overline{\Omega}$ which is 
compatible with the partition \eqref{eq:partition_omega}.
Let $\Delta_k \subset \Delta$ ($k=0,1,2\dots$) denote the set of $k$-simplexes in $\Delta$,
and set $\Delta_{\le k} := \cup_{j \le k} \Delta_j$ for convenience.
Set also 
$$
\hat{\Omega}_k := \underset{\sigma \in \Delta_k}{\bigcup} i(|\sigma|),
\qquad
\hat{\Omega}_{\le k} := \underset{\sigma \in \Delta_{\le k}}{\bigcup} i(|\sigma|).
$$
Note that, in our case,
$\overline{\Omega} = \hat{\Omega}_3 \sqcup \hat{\Omega}_2 
\sqcup \hat{\Omega}_{\le 1}$ holds. 
We also know that $\hat{\Omega}_2$ is a real analytic smooth surface (which may consist of
a finite number of connected components) and 
$\overline{\Omega}_{\le 1}$ is the union of
finite points and a finite number of closed segments in real analytic curves.
Hence, for any points $p$ and $q$ in $\hat{\Omega} \setminus \hat{\Omega}_{\le 1}$,
we can find a $C^\infty$-smooth curve $\ell$ in $\hat{\Omega}_3 \sqcup \hat{\Omega}_2$ which
joins $p$ and $q$. Furthermore, by a local modification of $\ell$, we may assume that
$\ell$ and the smooth surface $\hat{\Omega}_2$ intersect transversally.

With this observation, the proofs of theorems \ref{main-thm} and \ref{main-thm-2} are nearly identical and we will proceed with proving them together below. 
In case {\it ii)}, for any $\tilde D_\gamma$, we will choose a chain $\tilde D_{\gamma_1},\ldots,\tilde D_{\gamma_N}$ intersecting the path $\ell$ constructed above, which we will also abbreviate as $D_1,\ldots, D_N$, such that consecutive elements are adjacent, $\sig\subset\bar D_1$, and $D_N=\tilde D_\gamma$. By our curvature condition, we can also pick common boundary patches in between successive chain elements  $\gam_i\subset\pr D_i$ such that $\gam_1=\sig$, and $\bar D_i\cap\bar D_{i+1}\supset\gam_{i+1}$, $i=1,\ldots,N-1$, and we can choose them such that they are smooth and curved.
% Finally, without loss of generality, by choosing smaller coordinate patches, we may insure that the boundaries $\pr\gam_i$ (considering each $\gam_i$ as an open subset of a dimension 2 element of the stratification) are smooth and homotopic to a point. That is, we choose $\gam_i$ so that they have smooth boundaries and are topologically discs.

To understand why this type of procedure cannot work in the case of Lipschitz subdomains, consider the following to subsets in the plane
\begin{equation}
A:=\left\{  (x,y):-1<x<1, x^2\sin\frac{1}{x}<y<2 \right\},
\end{equation}
\begin{equation}
B:=\left\{ (x,y):-1<x<1, -1<y<0\right\}.
\end{equation}
It is easy to see that $A\cap B$ is the union of \emph{infinitely} many disjoint open sets. Another simple example (in the plane) of what may go wrong when taking the intersection of Lipschitz sets is the following
\begin{equation}
A':=\left\{ (x,y): 1\leq x <2, 0<y<2\right\}\cup\left\{(x,y): 0< x <1, \sqrt{1-x^2}<y<2\right\},
\end{equation}
\begin{equation}
B':=\left\{(x,y): 0<x<1, 0<y<1\right\}.
\end{equation}
Then
\begin{equation}
A'\cap B'=\left\{(x,y):0< x< 1, \sqrt{1-x^2}<y<1\right\}
\end{equation}
and this set does not have the segment property at $(0,1), (1,0)\in\pr(A'\cap B')$. We therefore in case {\it i)} need to assume that  $\Ci$ and $\Cii$ have the same subdomains of homogeneity in order to prove uniqueness.

\section{Uniqueness at the boundary}\label{section-boundary}
 
 Here we will prove Proposition \ref{bd-thm}. We will make use of the Stroh formalism which we will not develop in full here as it has been extensively covered in many works. A good reference for it is \cite{tanuma}, and we will use essentially the same notations. Particularly we wish to exploit properties of the \emph{surface impedance tensor} $Z$. A reader not closely familiar with this formalism may take equation \eqref{z-formula} below as a definition of $Z$.
 
 Following \cite[Section 1.2]{tanuma}, given $\vec m, \vec n\in S^2$, and an elastic tensor $C$ we introduce  the matrices  $R(\vec m, \vec n)$, $T(\vec m, \vec n)$, defined by
 \begin{equation}
 R_{ik}=\sum_{j,l=1}^3 C_{ijkl}m_jn_l, \quad
T_{ik}=\sum_{j,l=1}^3 C_{ijkl}n_jn_l.
 \end{equation}
 These as well as the following quantities constructed from them are defined pointwise in $\dom$.
 
 We will consider $S^2$ as a real-analytic manifold with the structure induced by the atlas
 \begin{equation}
 \begin{array}{c}
 \Phi_{i,\pm}:S^2\cap\{\pm x_i>0\}\to B_{\R^2}(1),\quad i=1,2,3,\\[5pt]
 \Phi_{i,\pm}(\vec n)=(n_j, n_k), \quad j,k\neq i, j<k. 
\end{array}
  \end{equation} 
 $R$ and $T$ are then clearly real-analytic in $\vec m, \vec n\in S^2$. As per \cite[Lemma 1.1]{tanuma},  $T$ is symmetric and positive definite. Then  $T(\vec n)^{-1}$ is  real-analytic in $\vec n\in S^2$. 
 
 \begin{lem}\label{lem-analytic}
 The surface impedance tensor $Z(\vec l)$ is real-analytic in $\vec l\in S^2$.
 \end{lem}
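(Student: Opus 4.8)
The plan is to reduce the claim to the known integral representation of the surface impedance tensor in the Stroh formalism and then invoke real-analyticity of the ingredients entering that representation. Recall that, for a fixed unit vector $\vec l \in S^2$, one chooses a complementary unit vector $\vec m$ with $\vec m \perp \vec l$, and for the angle $\phi\in[0,\pi)$ one sets $\vec n(\phi) = \vec m\cos\phi + (\vec l\times\vec m)\sin\phi$ (or an equivalent construction rotating $\vec m$ around $\vec l$ in the plane orthogonal to $\vec l$). One then has, following \cite[Section 1.2]{tanuma}, the formula
\begin{equation}\label{z-formula}
Z(\vec l) = \frac{1}{\pi}\int_0^\pi \left( T(\vec n(\phi))^{-1} + T(\vec n(\phi))^{-1} R(\vec n(\phi),\vec m(\phi))^t S(\vec l) \right)\dd\phi,
\end{equation}
or the more standard presentation of $Z$ as $-(S + \mathrm{i} H^{-1})$ built from the three Barnett--Lothe integrals $S(\vec l), H(\vec l), L(\vec l)$, each of which is an average over $\phi\in[0,\pi)$ of rational expressions in the matrices $R(\vec n(\phi),\vec m(\phi))$, $T(\vec n(\phi))$, $Q(\vec m(\phi))$ and their inverses. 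I would simply take whichever of these representations is cleanest and treat it as the definition of $Z(\vec l)$, as the excerpt invites.

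First I would fix $\vec l_0\in S^2$ and work in a neighborhood of it. The construction of $\vec m$ and of the rotating frame $\vec n(\phi)$ can be carried out real-analytically in $\vec l$ locally: pick $\vec m(\vec l)$ depending real-analytically on $\vec l$ for $\vec l$ near $\vec l_0$ (e.g. by projecting a fixed vector onto the plane $\vec l^\perp$ and normalizing, which is real-analytic wherever the projection is nonzero), and set $\vec e(\vec l) = \vec l \times \vec m(\vec l)$, also real-analytic. Then $\vec n(\phi,\vec l) = \vec m(\vec l)\cos\phi + \vec e(\vec l)\sin\phi$ is jointly real-analytic in $(\phi,\vec l)$ and stays on $S^2$. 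Second, $R$ and $T$ are real-analytic on $S^2\times S^2$ and $S^2$ respectively (stated in the excerpt), and $T(\vec n)$ is symmetric positive definite, so $T(\vec n(\phi,\vec l))^{-1}$ is real-analytic in $(\phi,\vec l)$ by Cramer's rule (the determinant is bounded below on the relevant compact set). The Barnett--Lothe integrands are built from these by finitely many multiplications and one more matrix inversion of a matrix that is known to be invertible (positive definiteness of the relevant Stroh block), so the full integrand $F(\phi,\vec l)$ is real-analytic in $(\phi,\vec l)$ on a set of the form $[0,\pi]\times U$ with $U$ a neighborhood of $\vec l_0$. Third, integrate out $\phi$: since $F$ is real-analytic, hence $C^\infty$, in $(\phi,\vec l)$ on the compact-in-$\phi$ domain, and one can moreover extend $F$ to a holomorphic function of $\vec l$ in a complex neighborhood of $U$ (with $\phi$ still real), the integral $Z(\vec l) = \frac{1}{\pi}\int_0^\pi F(\phi,\vec l)\dd\phi$ is holomorphic in that complex neighborhood by Morera's theorem / differentiation under the integral sign, hence real-analytic in $\vec l$ on $U$. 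Since $\vec l_0\in S^2$ was arbitrary, $Z$ is real-analytic on all of $S^2$.

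The main obstacle — really the only nontrivial point — is the uniform invertibility needed to pass analyticity through the matrix inverses inside the integrand, together with making sure the choice of complementary frame $\vec m(\vec l)$ can be made analytically (this is why I localize near $\vec l_0$: there is no global analytic section of unit vectors orthogonal to $\vec l$ on all of $S^2$, but local sections are all that is needed). The positive-definiteness of $T(\vec n)$ quoted from \cite[Lemma 1.1]{tanuma} handles one inverse; for the other (the matrix whose inverse produces $H$, or equivalently the one guaranteeing $\det$ bounded away from zero in the Stroh eigenvalue problem away from the real axis), one appeals to the standard fact that the Stroh eigenvalues are non-real under the strong convexity condition, which gives the needed lower bound on a determinant over the compact parameter set $[0,\pi]\times\overline{U}$. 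Everything else is routine: composition, products, and inverses of real-analytic (matrix-valued) maps are real-analytic, and integration of a jointly real-analytic integrand over a fixed compact interval preserves real-analyticity in the parameter.
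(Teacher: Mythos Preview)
Your proposal is correct and follows essentially the same route as the paper: localize on $S^2$, pick a real-analytic orthonormal frame in $\vec l^{\perp}$, note that $R$, $T$, and $T^{-1}$ are real-analytic so the angular integrands are, integrate out $\phi$, and then invert the remaining positive-definite matrix. The paper carries this out with Tanuma's explicit $S_1,S_2$ representation $Z=S_2^{-1}+\sqrt{-1}\,S_2^{-1}S_1$, so the only post-integration inversion is that of the positive-definite $S_2$; this makes your appeal to non-reality of the Stroh eigenvalues unnecessary, though not wrong.
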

 \begin{proof}
  Let $\vec l=(l_1,l_2,l_3)^t\in S^2$. Without loss of generality we may assume $l_3>0$, otherwise the argument given bellow may be given in a different coordinate chart. Let 
  \begin{equation}
  \vec m_0=\frac{1}{\sqrt{1-l_1^2}}(1-l_1^2,-l_1l_2,-l_1l_3)^t, \quad
  \vec n_0 = \frac{1}{\sqrt{1-l_1^2}} (0, l_3, -l_2)^t.
  \end{equation}
 and for an angle $0\leq\phi\leq2\pi$ let (see \cite[eq. (1.31)]{tanuma} for comparison)
 \begin{equation}
 \vec m_\phi=\vec m_0\cos \phi+\vec n_0\sin \phi,\quad
 \vec n_\phi=-\vec m_0\sin \phi+\vec n_0\cos \phi.
 \end{equation}
 As in \cite[Definition 1.13]{tanuma}, we consider the matrices
 \begin{equation}
 S_1(\vec l)=-\frac{1}{2\pi}\int_0^{2\pi}T(\vec n_\phi)^{-1}R(\vec m_\phi, \vec n_\phi)^t d \phi,\quad
 S_2(\vec l)=\frac{1}{2\pi}\int_0^{2\pi}T(\vec n_\phi)^{-1}d \phi.
 \end{equation}
 These two are both real-analytic in $(l_1, l_2)\in B_{\R^2}(1)$. As per \cite[Lemma 1.14]{tanuma}, $S_2$ is positive definite and hence $S_2^{-1}$ is also real analytic. 
 
 According to \cite[Theorem 1.18]{tanuma}, 
 \begin{equation}\label{z-formula}
 Z(\vec l)=S_2(\vec l)^{-1}+\sqrt{-1}S_2(\vec l)^{-1} S_1(\vec l),
 \end{equation}
  so we have our claim. 
 \end{proof}
 
 \begin{lem}
 If $P\in\sig$, we can recover the surface impedance tensor $Z(P,\vec l)$ from $\dn{\sig}{C}$, for any $\vec l\perp\vec n(P)$, $\vec l\in S^2$.\footnote{Here $C$ stands for either $\Ci$ or $\Cii$.}
 \end{lem}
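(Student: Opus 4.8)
The plan is to recover the surface impedance tensor $Z(P,\vec l)$ at a boundary point $P\in\Sigma$ from the localized DN map by constructing special solutions that concentrate near $P$ and whose oscillatory behaviour in a tangential direction $\vec l$ picks out $Z(P,\vec l)$. Since $\dn{\Sigma}{\Ci}=\dn{\Sigma}{\Cii}$ and the construction below only uses the DN map, the recovered object is the same for both tensors, which is exactly what Proposition~\ref{bd-thm} will need.

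First I would fix $P\in\Sigma$, choose boundary coordinates so that $\vec n(P)$ is the inward normal direction and $\vec l$ lies in the tangent plane, and recall the classical fact that the surface impedance tensor governs the decaying half-space solutions of the constant-coefficient system $L_{C(P)}w=0$: for a tangential covector of unit length $\vec l$ and large frequency parameter $\tau>0$, there are solutions of the frozen-coefficient equation of the form $w_\tau(x)=e^{\sqrt{-1}\,\tau\, x\cdot\vec l}\,a(x_n,\tau)$ that decay exponentially as $x_n\to\infty$ into $\Omega$, and whose Cauchy data on $\{x_n=0\}$ satisfy $(C(P)::Dw_\tau)n = -\sqrt{-1}\,\tau\, Z(P,\vec l)\, w_\tau + O(1)$ (this is essentially the content of the Stroh formalism recalled above, cf. \cite{tanuma}). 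The plan is then to use these $w_\tau$, cut off near $P$ and corrected so as to become exact solutions of $L_Cu=0$ in $\Omega$ with support of the boundary trace inside $\Sigma$; the correction terms are lower order in $\tau$ because $C$ is constant near $P$ (it equals $C(P)$ on the homogeneous piece containing $P$ in its boundary) and because the cutoff error is supported away from $P$. Pairing $\dn{\Sigma}{C}\omega_\tau$ against a suitably chosen test trace $\phi_\tau$ and using the quadratic-form definition of the DN map, the leading term as $\tau\to\infty$ isolates the bilinear form $\overline{\phi}\cdot Z(P,\vec l)\,\omega$ evaluated against the relevant polarization vectors, so that letting $\vec l$ and the polarizations vary recovers the full matrix $Z(P,\vec l)$. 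Combined with Lemma~\ref{lem-analytic}, this gives $Z(P,\vec l)$ as a real-analytic function of $\vec l$ on the great circle $\{\vec l\in S^2:\vec l\perp\vec n(P)\}$.

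The main obstacle I expect is the rigorous justification of the asymptotic expansion: one must build an honest $H^1(\Omega)$ solution $u_\tau$ with boundary trace $\omega_\tau$ supported in $\Sigma$, control the remainder after subtracting the explicit Stroh ansatz (both the error from the exponential localization into $\Omega$, which is harmless, and the error from the tangential cutoff near $\partial\Sigma$, which must be shown to contribute at strictly lower order in $\tau$ to the boundary pairing), and verify that the solution operator for $L_C$ on $\Omega$ maps these remainders to something negligible in the relevant norm. This requires an energy estimate for $L_C$ that is uniform in $\tau$ after appropriate rescaling, together with a careful accounting of where the constant coefficient $C(P)$ is replaced by the (possibly different, but still constant) values of $C$ on neighbouring subdomains — which is precisely where the geometry enters and why $P$ is taken in the smooth part of $\Sigma$. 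Once this analytic work is done, the algebraic step of reading off $Z(P,\vec l)$ from the leading coefficient is straightforward from the structure of the Stroh eigenvalue problem.
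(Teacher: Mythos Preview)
Your proposal outlines the standard boundary-determination argument via highly oscillatory Stroh solutions, and the sketch is correct in its essentials. The paper, however, does not reprove this at all: its entire proof is a one-line citation to \cite[Theorem 2.16]{tanuma}, treating the recovery of $Z(P,\vec l)$ from the localized DN map as a known black box. So you are not taking a different route so much as sketching what lies inside the box.

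A couple of remarks on your sketch, since you went further than the paper does. First, your worry about ``where the constant coefficient $C(P)$ is replaced by the values of $C$ on neighbouring subdomains'' is largely moot here: because $C$ is piecewise homogeneous and $P\in\Sigma\subset\partial D_1$, there is an open half-ball in $\Omega$ near $P$ on which $C\equiv C(P)$ exactly, so the frozen-coefficient Stroh solution is an \emph{exact} solution there and the only errors come from the tangential cutoff and from the exponentially small tail reaching other subdomains. Second, the sign and normalization in your formula $(C(P)::Dw_\tau)n=-\sqrt{-1}\,\tau\,Z(P,\vec l)\,w_\tau+O(1)$ depend on conventions (inward vs.\ outward normal, choice of decaying vs.\ growing modes); you should pin these down if you actually write out the argument, but they do not affect whether $Z(P,\vec l)$ is recoverable. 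With those caveats, your plan would reproduce the cited theorem.
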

 \begin{proof}
 This is a known result. See for example \cite[Theorem 2.16]{tanuma}.
 \end{proof}
 
Since $C$ is constant in $D_1$, $Z(\cdot,\vec l)$ is also constant in $D_1$. We will then denote it by $Z_{D_1}(\vec l)$. The previous lemma implies that we can determine from $\dn{\sig}{C}$ the values $Z_{D_1}(\cdot)$ takes on a circle of radius 1 situated in the plane tangent to $\sig$ at $P$. We can apply this observation to all the points of $\sig$ and, since $\sig$ is assumed to be curved, we have:

\begin{lem}\label{bd-det}
There is an open set $\mathcal{S}\subset S^2$  such that $Z_{D_1}|_{\mathcal{S}}$ can be recovered from $\dn{\sig}{C}$.
\end{lem}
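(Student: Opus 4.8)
The plan is to reduce the lemma to an elementary fact about great circles on $S^2$, using only the two preceding lemmas. Those lemmas tell us that for every $P\in\sig$ the map $\dn{\sig}{C}$ determines $Z(P,\vec l)$ for every unit vector $\vec l$ with $\vec l\perp\vec n(P)$; since $C$ is constant on $D_1$ and $\sig\subset\pr D_1$, this value coincides with $Z_{D_1}(\vec l)$. Consequently $\dn{\sig}{C}$ determines $Z_{D_1}(\vec l)$ for every $\vec l\in S^2$ which is orthogonal to $\vec n(P)$ for at least one $P\in\sig$; denote by $\mathcal S'$ the set of all such $\vec l$. It therefore suffices to exhibit a nonempty open subset $\mathcal S$ of $\mathcal S'$, since $Z_{D_1}$ is then recovered on $\mathcal S$.

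For this I would first invoke the curvature of $\sig$: by definition there is a continuous curve $c:[0,1]\to S^2$ with $c([0,1])\subset\vec n(\sig)$, and with $\vec a:=c(0)$ and $\vec b:=c(1)$ distinct. I then claim that
\[
\mathcal S:=\bigl\{\vec l\in S^2:(\vec l\cdot\vec a)(\vec l\cdot\vec b)<0\bigr\}
\]
does the job, and I would verify three things. First, $\mathcal S$ is open, being the preimage of $(-\infty,0)$ under the continuous map $\vec l\mapsto(\vec l\cdot\vec a)(\vec l\cdot\vec b)$. Second, $\mathcal S\subset\mathcal S'$: if $\vec l\in\mathcal S$ then the continuous function $t\mapsto\vec l\cdot c(t)$ has opposite signs at $t=0$ and $t=1$, so by the intermediate value theorem it vanishes at some $t_\ast\in(0,1)$, giving $\vec l\perp c(t_\ast)$ with $c(t_\ast)\in\vec n(\sig)$, i.e. $\vec l\in\mathcal S'$. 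Third, $\mathcal S\ne\emptyset$: since $\vec a\ne\vec b$ are unit vectors, $\vec l_0:=(\vec a-\vec b)/|\vec a-\vec b|$ is well defined, and from $|\vec a-\vec b|^2=2(1-\vec a\cdot\vec b)$ one gets $\vec l_0\cdot\vec a=\frac12|\vec a-\vec b|>0$ and $\vec l_0\cdot\vec b=-\frac12|\vec a-\vec b|<0$, so $\vec l_0\in\mathcal S$.

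I do not anticipate any real obstacle here; the one point that deserves to be spelled out is the nonemptiness of $\mathcal S$, which is exactly the geometric statement that two distinct points of the sphere are strictly separated by some great circle — equivalently, that the family of great circles $\{\vec l\in S^2:\vec l\perp\vec n\}$, as $\vec n$ ranges over $\vec n(\sig)$, already sweeps out an open region as soon as $\vec n(\sig)$ contains more than one point. It is also worth confirming that the identification $Z(P,\vec l)=Z_{D_1}(\vec l)$ at boundary points $P\in\sig\subset\pr D_1$ is legitimate, which holds because the surface impedance tensor at $P$ is built only from the constant value of $C$ on $D_1$.
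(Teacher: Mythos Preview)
Your proposal is correct and follows exactly the paper's approach: the paper's argument for this lemma is simply the one-sentence observation that each $P\in\sig$ yields the values of $Z_{D_1}$ on the great circle orthogonal to $\vec n(P)$, and that by the curvature assumption the union of these circles contains an open set. You have supplied a clean, rigorous verification of that last geometric claim (via the intermediate value theorem applied to $t\mapsto\vec l\cdot c(t)$), which the paper leaves implicit.
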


From the representation of the surface impedance tensor in \cite[Thm. 1.18]{tanuma} and the definitions of the matrices involved in its representation, $Z_{D_1}(\cdot)$ is real-analytic on $S^2$. Since $\dn{\sig}{\Ci}=\dn{\sig}{\Cii}$, by the previous lemma and the unique continuation property of real-analytic functions it follows that
$Z^{(1)}_{D_1}(\vec l)=Z^{(2)}_{D_1}(\vec l)$, for all $\vec l\in S^2$.

\begin{lem}
If $C$ is a constant elastic tensor and $Z$ on $S^2$ is its associated surface impedance tensor, then the components of $C$ can be recovered from the knowledge of $Z$.
\end{lem}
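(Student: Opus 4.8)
The plan is to first read off the two matrices $S_1(\vec{l})$ and $S_2(\vec{l})$ of \eqref{z-formula} from $Z$, and then to reconstruct separately the two parts of $C$ that they carry. Since $S_1,S_2$ are real and $S_2$ is symmetric and positive definite, \eqref{z-formula} gives $\operatorname{Re}Z(\vec{l})=S_2(\vec{l})^{-1}$ and $\operatorname{Im}Z(\vec{l})=S_2(\vec{l})^{-1}S_1(\vec{l})$, so from the knowledge of $Z$ on $S^2$ we recover $S_2(\vec{l})=(\operatorname{Re}Z(\vec{l}))^{-1}$ and then $S_1(\vec{l})=S_2(\vec{l})\operatorname{Im}Z(\vec{l})$ for every $\vec{l}\in S^2$.

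Next I would recover the acoustical part of $C$. By definition $S_2(\vec{l})=\frac1{2\pi}\int_0^{2\pi}T(\vec{n}_\phi)^{-1}\,d\phi$ is the (normalized) integral of the matrix function $\vec{n}\mapsto T(\vec{n})^{-1}$ over the great circle $\vec{l}^{\perp}\cap S^2$ --- essentially the angular part of the fundamental solution of $L_C$ in the direction $\vec{l}$, i.e.\ the link between $Z$ and the fundamental solution mentioned in the Introduction. Since $T(\vec{n})=T(-\vec{n})$, this function is real-analytic and even, so by the injectivity of the Funk (spherical Radon) transform on even functions it is determined by $\{S_2(\vec{l}):\vec{l}\in S^2\}$; thus we recover $T(\vec{n})$ for all $\vec{n}\in S^2$. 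Comparing coefficients of the quadratic form $T(\vec{n})_{ik}=\sum_{j,l}C_{ijkl}n_jn_l$ then yields the symmetrized components $C^{s}_{ijkl}:=\tfrac12(C_{ijkl}+C_{ilkj})$ for all $i,j,k,l$.

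It remains to recover the skew part $C^{a}_{ijkl}:=\tfrac12(C_{ijkl}-C_{ilkj})$, and for this I would use $S_1$. Writing $C=C^{s}+C^{a}$ in $S_1(\vec{l})=-\frac1{2\pi}\int_0^{2\pi}T(\vec{n}_\phi)^{-1}R(\vec{m}_\phi,\vec{n}_\phi)^{t}\,d\phi$ and using that $\vec{m}_\phi=-\tfrac{d}{d\phi}\vec{n}_\phi$, the contribution of $C^{s}$ becomes $\tfrac1{4\pi}\int_0^{2\pi}T(\vec{n}_\phi)^{-1}\tfrac{d}{d\phi}T(\vec{n}_\phi)\,d\phi$ by the $(j,l)$-symmetry of $C^{s}$, which is already known because $T$ is known on all of $S^2$. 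For the contribution of $C^{a}$, the key observation is that on a unit-speed great circle $\vec{n}_\phi\times\tfrac{d}{d\phi}\vec{n}_\phi$ equals the constant vector $\vec{l}$, so that $\sum_{j,l}C^{a}_{ijkl}(\vec{m}_\phi)_j(\vec{n}_\phi)_l$ is independent of $\phi$ and linear in $\vec{l}$; hence this contribution to $S_1(\vec{l})$ equals $-\tfrac12\,S_2(\vec{l})P(\vec{l})$ for an explicit matrix $P(\vec{l})$ that is linear in $\vec{l}$ and whose dependence on $C^{a}$ factors through the tensor $\rho_{ikm}:=\sum_{j,l}\varepsilon_{jlm}C^{a}_{kjil}$. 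Consequently $P(\vec{l})$ is determined by $S_1$, $S_2$ and $T$; being linear in $\vec{l}$, its coefficients $\rho_{ikm}$ are read off for all $i,k,m$, and inverting the contraction with $\varepsilon$ (using that $C^{a}$ is skew in its second and fourth indices) recovers $C^{a}$. Then $C=C^{s}+C^{a}$, which proves the claim.

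The main obstacle is this last step: the acoustical tensor $T$ alone does \emph{not} determine $C$ --- the kernel of $C\mapsto T(\cdot)$ is exactly the space of tensors skew in the second and fourth indices --- so one genuinely has to extract the complementary information from $\operatorname{Im}Z$, and the computation above succeeds only because the great-circle geometry entering the definition of $S_1$ forces the $C^{a}$-integrand to be constant. Some care is also needed with the Stroh-formalism conventions (orientation of $\vec{n}_\phi$, transposes, and the precise identity $\operatorname{Re}Z=S_2^{-1}$) so that the bookkeeping stays consistent. The remaining ingredients --- injectivity of the Funk transform on even functions and invertibility of $T$, guaranteed by strong convexity --- are standard.
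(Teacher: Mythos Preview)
Your argument is correct, but it takes a markedly different route from the paper and, in its last stage, does more work than is needed.

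The paper's proof is a two-line application of the Stroh--fundamental-solution link: from the representation $\Gamma(x)=(4\pi|x|)^{-1}\bigl(\operatorname{Re}Z(x/|x|)\bigr)^{-1}$ one obtains the full fundamental solution of $L_C$ from $Z$, and then the Fourier transform gives the principal symbol $\sum_{j,l}C_{ijkl}\xi_j\xi_l=(\hat\Gamma(\xi))^{-1}_{ik}$, from which all $C_{ijkl}$ are read off. Your approach instead extracts $S_2$ from $\operatorname{Re}Z$, inverts the Funk transform to recover $T(\vec n)$ on $S^2$, and then proceeds to split $C=C^s+C^a$. The two routes are in fact the same inversion in disguise: passing from the angular profile of a homogeneous-degree~$-1$ fundamental solution to the homogeneous-degree~$-2$ symbol is exactly a Funk/spherical-Radon step, which the paper hides inside the Fourier transform and you carry out explicitly.

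The one point worth correcting is your ``main obstacle.'' For tensors carrying the elasticity symmetries $C_{ijkl}=C_{ijlk}=C_{klij}$ (hence also $C_{ijkl}=C_{jikl}$), the acoustic tensor $T(\cdot)$ \emph{does} determine $C$: if $C_{ijkl}+C_{ilkj}=0$ for all indices, then combining this (skew under $2\!\leftrightarrow\!4$) with the minor symmetry (symmetric under $3\!\leftrightarrow\!4$) forces $C$ to be symmetric under $2\!\leftrightarrow\!3$, hence symmetric under all permutations of positions $2,3,4$; but it is also skew under $2\!\leftrightarrow\!4$, so $C=0$. Thus your step~3 already recovers $C$, and the entire $S_1$/$\operatorname{Im}Z$ analysis --- while correctly carried out --- is unnecessary. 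What your explicit route buys is that one sees precisely which integral transform is being inverted; what the paper's route buys is brevity and the avoidance of any hand computation with $R$ and $S_1$.
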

\begin{proof}
The fundamental solution $\Gamma(x)=(\Gamma_{lm}(x))$ is by definition a solution of
\begin{equation}
\sum_{j,k,l=1}^3 C_{ijkl}\partial_j\partial_k\Gamma_{lm}(x)=-\delta_{im}\delta(x),\quad i,\,m\in\{1,2,3\},
\end{equation}
and has the representation formula (see \cite{nakamura-tanuma}, \cite[Thm. 2.2]{tanuma})
\begin{equation}
\Gamma(x)=(4\pi |x|)^{-1}\left(Re\; Z({x}/{|x|})\right)^{-1},\,\,x\not=0.
\end{equation}
Since we have that $C_{ijkl}\xi_j\xi_k\hat \Gamma_{lm}(\xi)=\delta_{im},\,\,i,\,m\in\{1,2,3\}$, we can then recover the elastic tensor components: 
$\sum_{j,k=1}^3 C_{ijkl}\xi_j\xi_k=(\hat\Gamma^{-1})_{il}(\xi)$, $\xi=(\xi_1,\xi_2,\xi_3)\in{\Bbb R}^3$, $i,l\in\{1,2,3\}$
from which we can recover $C=(C_{ijkl})$.
\end{proof}

\bigskip
It follows that under the assumptions of Proposition \ref{bd-thm}, $\Ci|_{D_1}=\Cii|_{D_1}$.

\section{Proof of Theorems \ref{main-thm} and \ref{main-thm-2}}\label{section-proofs}

The major task of this section is to prove the following proposition, in either case {\it i)} or case {\it ii)}.

\begin{prop}\label{keyProp}
If
$
\Lambda_{C^{(1)}}^{\Gamma_1}=\Lambda_{C^{(2)}}^{\Gamma_1},
$
then 
$\Lambda_{C^{(1)}}^{\Gamma_i}=\Lambda_{C^{(2)}}^{\Gamma_i}$, $i=2,\cdots,N$.
\end{prop}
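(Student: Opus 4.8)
The plan is to argue by induction on the chain index $i$, showing that if $\Lambda^{\Gamma_i}_{\Ci}=\Lambda^{\Gamma_i}_{\Cii}$ on the curved patch $\gam_i\subset\partial D_i$, then the two tensors agree on $D_i$, and that this in turn allows us to ``push'' the equality of DN maps across the interface $\gam_{i+1}\subset\bar D_i\cap\bar D_{i+1}$. The base case $i=1$ is precisely Proposition \ref{bd-thm} together with the final remark of Section \ref{section-boundary}: since $\gam_1=\sig$ is curved and $\Lambda^{\sig}_{\Ci}=\Lambda^{\sig}_{\Cii}$, we get $\Ci|_{D_1}=\Cii|_{D_1}$. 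For the inductive step, suppose $\Ci|_{D_i}=\Cii|_{D_i}=:C_0$ (a constant tensor). Since $\gam_{i+1}$ is a curved open patch of $\partial D_i$, the boundary-determination machinery of Section \ref{section-boundary} — recovering the surface impedance tensor $Z$ along $\gam_{i+1}$ from the DN map on $\gam_{i+1}$, then using real-analyticity of $Z$ on $S^2$ and the last lemma to recover the full tensor — will show that knowing $\Lambda^{\gam_{i+1}}_{C^{(I)}}$ determines $C^{(I)}|_{D_{i+1}}$. So the crux is: \emph{from $\Ci|_{D_i}=\Cii|_{D_i}$ and $\Lambda^{\Gamma_i}_{\Ci}=\Lambda^{\Gamma_i}_{\Cii}$, deduce $\Lambda^{\Gamma_{i+1}}_{\Ci}=\Lambda^{\Gamma_{i+1}}_{\Cii}$.}

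I would realize this ``jump of the Cauchy data across a known region'' via an inner-extension argument, which is the content of the forthcoming Proposition \ref{inner extension} (the Ikehata-type inner extension of the DN map). Concretely: given Dirichlet data $\phi$ supported in $\gam_{i+1}$, I want to solve the transmission problem for $C^{(I)}$ in a neighborhood $\dom\setminus\bar D_i$ augmented by $D_i$, using the fact that in $D_i$ the operator $L_{C_0}$ is the \emph{same} for both $I=1,2$. The equality of DN maps on $\Gamma_i$ says that for any Cauchy data $(\omega,(C^{(I)}::Du^{(I)})n)$ on $\gam_i$ coming from a solution in $\dom$, the two agree; transporting this data through the region $D_i$ (where the PDE coincides) by solving a mixed boundary value problem for $L_{C_0}$ on $D_i$ with the given Cauchy data on the $\gam_i$-side of $\partial D_i$, one obtains matching Cauchy data on the opposite face $\gam_{i+1}\subset\partial D_i\cap\partial D_{i+1}$. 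Because $C^{(1)}$ and $C^{(2)}$ agree throughout $D_i$, the solution in $D_i$ is literally the same function for both, so its trace and conormal derivative on $\gam_{i+1}$ are identical; gluing back with the solution in $D_{i+1}\cup\cdots$ shows $\Lambda^{\gam_{i+1}}_{\Ci}=\Lambda^{\gam_{i+1}}_{\Cii}$. Unique continuation for the elliptic system $L_{C_0}$ inside $D_i$ (which is homogeneous, hence analytic-coefficient, so the system has the strong unique continuation / Runge-type approximation property) is what guarantees that a rich enough class of Cauchy data on $\gam_{i+1}$ is attained this way.

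The main obstacle I anticipate is making this ``Cauchy-data transport across $D_i$'' rigorous at the level of the localized spaces $\hb$ and $\hbi$: one must control the mixed boundary value problem on the Lipschitz (or subanalytic) domain $D_i$ with Cauchy data prescribed only on the open patch $\gam_i$ and zero (or free) conditions on $\partial D_i\setminus(\gam_i\cup\gam_{i+1})$, and ensure the resulting traces on $\gam_{i+1}$ lie in the right dual space and span a dense enough set to conclude equality of the DN maps as operators. This is exactly where Proposition \ref{inner extension} does the heavy lifting; modulo that, the chain is finite ($i=1,\dots,N$), the curvature hypothesis on each $\gam_i$ is used at every step to invoke boundary determination, and the induction closes. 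In case \emph{ii)} the only change is that the chain $\tilde D_{\gamma_1},\dots,\tilde D_{\gamma_N}$ and the smooth curved patches $\gam_i$ were already constructed along the transversal path $\ell$ in Section \ref{section-setup}, so the identical argument applies verbatim, yielding $\Ci=\Cii$ on every $\tilde D_\lambda$ and hence on $\dom$.
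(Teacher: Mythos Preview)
Your reduction of Proposition \ref{keyProp} to an induction along the chain, with boundary determination (Proposition \ref{bd-thm}) supplying $\Ci|_{D_i}=\Cii|_{D_i}$ at each stage and Proposition \ref{inner extension} supplying the passage $\Lambda^{\gam_i}_{C}\rightsquigarrow\Lambda^{\gam_{i+1}}_{C}$, is exactly the paper's structure; nothing is missing at this level.

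The one point worth flagging is your heuristic for \emph{how} the inner extension works. You picture it as solving a mixed boundary value problem on $D_i$ with Cauchy data on $\gam_i$ and reading off Cauchy data on $\gam_{i+1}$. The paper does \emph{not} do this (such a Cauchy problem is ill-posed and would not directly yield a bounded map between the localized trace spaces). Instead the paper, following Ikehata, uses a Runge approximation lemma for $L_{C^0}$ (Lemma \ref{runge}) to show that $\dn{\gam_1}{C}-\dn{\gam_1}{\Cz}$ determines the single layer operator $S^{\sig_2}$ on $\sig_2$ (Lemmas \ref{prop2}--\ref{s-det}), and then recovers $\dn{\sig_2}{C}$ via the identity $\dn{\sig_2}{C}-\dn{\sig_2,+}{C}=(S^{\sig_2})^{-1}$ (Lemma \ref{formulae for DN maps}). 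Since you explicitly defer the rigorous work to Proposition \ref{inner extension}, this does not affect the correctness of your outline for Proposition \ref{keyProp}, but your anticipated mechanism would not succeed as a proof of Proposition \ref{inner extension} itself.
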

Combining this with Lemma \ref{bd-det}, we immediately have Theorem \ref{main-thm}.

In order to prove Proposition \ref{keyProp}, we will proceed inductively. Let $\dom_i:=\dom\setminus\cap_{j=1}^{i-1}\bar D_j$. In this case $\gam_i\subset\pr\dom_i$ and we can define $\dn{\gam_i}{C}$, the \DN map for the anisotropic elasticity equation on the domain $\dom_i$, with elastic tensor $C|_{\dom_i}$, localized to $\gam_i$. To conclude the proof it is enough to show:

\begin{prop}\label{inner extension}
$\dn{\gam_1}{C}$ determines $\dn{\gam_2}{C}$. 
\end{prop}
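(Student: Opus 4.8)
The plan is to show that the Cauchy data of solutions on $\dom_1$ can be ``propagated inward'' across the region $D_1$ (whose elastic tensor we have already determined by Proposition \ref{bd-thm}, since $\gam_1=\sig$ is curved) to produce the Cauchy data on $\gam_2\subset\pr\dom_2$. Concretely, given $\phi\in\hb(\gam_2)$, extend it by zero to an element of $H^{1/2}$ on $\pr\dom_2$ and solve $L_Cv=0$ in $\dom_2$ with $v|_{\pr\dom_2}=\phi$; the goal is to compute $(C::Dv)\vec n|_{\gam_2}$ from the map $\dn{\gam_1}{C}$ alone. The key observation is that $D_1$ is a \emph{known} homogeneous region with a \emph{known} elastic tensor $C|_{D_1}$ (the same for $\Ci$ and $\Cii$), so all analysis internal to $D_1$ is explicitly available; only the behavior in $\dom_2=\dom\setminus\overline{D_1}$ encodes the unknown coefficients.

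First I would set up the gluing. Let $w$ solve the transmission/boundary problem on $D_1$: $L_Cw=0$ in $D_1$, with $w=0$ on $\pr D_1\cap\pr\dom_1$ (the outer part), and $w=v$ on $\gam_2=\pr D_1\cap\pr\dom_2$ together with the matching conditions that will make the concatenation $u$ defined by $u=w$ on $D_1$ and $u=v$ on $\dom_2$ lie in $H^1(\dom_1)$ and solve $L_Cu=0$ there. Following the inner-extension idea of \cite{ikehata}, one arranges that $u|_{\pr\dom_1}=\omega$ is supported in $\sig=\gam_1$ and that $u$ is the solution of $L_Cu=0$ in $\dom_1$ with that boundary datum; then $\dn{\gam_1}{C}\omega = (C::Du)\vec n|_{\gam_1}$ is known. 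The transmission conditions across $\gam_2$ (continuity of displacement and of traction) tie $(C::Dv)\vec n|_{\gam_2}$ to $(C::Dw)\vec n|_{\gam_2}$, and the latter is computable purely from the known problem in $D_1$ once the Cauchy data of $u$ on $\gam_1$ is known, because $w$ solves an elliptic system in a known homogeneous domain with known Dirichlet data on $\gam_1$ (via $u$) and a homogeneous condition on the remaining outer boundary. I would make this precise by introducing the DN-type operator for $D_1$ relating data on $\gam_1$ to data on $\gam_2$ and showing it is determined by $C|_{D_1}$, hence known.

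The technical heart is an Alessandrini-type identity / integration-by-parts argument. For two boundary inputs $\omega,\tilde\omega\in\hb(\gam_1)$ with associated global solutions $u,\tilde u$ on $\dom_1$ and their restrictions $v,\tilde v$ to $\dom_2$, one writes
\begin{equation}
\la\dn{\gam_1}{C}\omega,\tilde\omega\ra=\int_{\dom_1}Du:(C::D\tilde u)=\int_{D_1}Du:(C::D\tilde u)+\int_{\dom_2}Dv:(C::D\tilde v),
\end{equation}
and the first term on the right is an explicitly computable quadratic form in the Cauchy data of $u,\tilde u$ on $\pr D_1$ (hence on $\gam_2$, after using the known interior solution operator of $D_1$). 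Subtracting the known $D_1$-contribution isolates $\int_{\dom_2}Dv:(C::D\tilde v)=\la\dn{\gam_2}{C}(v|_{\gam_2}),\tilde v|_{\gam_2}\ra$. The remaining point is a density/surjectivity statement: as $\omega$ ranges over $\hb(\gam_1)$, the traces $v|_{\gam_2}$ must range over a dense subset of $\hb(\gam_2)$, so that the bilinear form above pins down $\dn{\gam_2}{C}$ completely. This density is where I expect the main obstacle: it requires a Runge-type approximation statement for the anisotropic elasticity system across the homogeneous layer $D_1$ — i.e. that solutions on $\dom_2$ with boundary data on $\gam_2$ can be approximated by restrictions of solutions on $\dom_1$ vanishing outside $\gam_1$. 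I would prove it by a Hahn--Banach argument: a functional on $\hb(\gam_2)$ annihilating all such traces gives, via the adjoint problem in $D_1$ and unique continuation for $L_C$ across the curved portion of $\gam_2$ (using that $C|_{D_1}$ is constant, hence real-analytic coefficients, so weak unique continuation holds), a solution in $D_1$ with vanishing Cauchy data on a curved piece of $\pr D_1$, forcing it to vanish; this yields triviality of the functional and hence density. Care must be taken with the function spaces $\hb,\hbi$ and with the Lipschitz (or subanalytic smooth) regularity of the interfaces, but these are routine once the unique continuation input is in place.
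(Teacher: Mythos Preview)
Your outline has the right ingredients (unique continuation in the known layer $D_1$ and a Runge-type density argument), but the central step contains a genuine error. The identity
\[
\int_{\dom_2}Dv:(C::D\tilde v)=\la\dn{\gam_2}{C}(v|_{\gam_2}),\tilde v|_{\gam_2}\ra
\]
is false as written: if $u$ solves $L_Cu=0$ in $\dom_1$ with $u|_{\pr\dom_1}\in\hb(\gam_1)$, then the trace of $v=u|_{\dom_2}$ on $\pr\dom_2$ is supported in $\sig_2=\pr\dom_2\setminus\pr\dom_1$, \emph{not} in the smaller patch $\gam_2$. Consequently the right-hand side must involve $\dn{\sig_2}{C}$ and pairings over $\sig_2$; your density claim likewise must target $\hb(\sig_2)$, and only afterwards can you restrict to $\gam_2$. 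Relatedly, your first-paragraph gluing---prescribing both $w=v$ and the traction match on $\gam_2$ while also imposing $w=0$ on the outer boundary of $D_1$---is an overdetermined Cauchy problem in $D_1$, so the extension $u$ you want does not exist for generic $\phi$; this is precisely why the Runge approximation is needed rather than an exact extension.

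There is also an imprecision in the claim that the $D_1$ integral is ``explicitly computable'' via the ``known interior solution operator of $D_1$.'' The Dirichlet solution operator in $D_1$ does not give you $u|_{\sig_2}$ from data on $\gam_1$; what you actually need is to propagate Cauchy data $(\omega,\dn{\gam_1}{C}\omega)$ from $\gam_1$ across $D_1$ by unique continuation. That is legitimate for a uniqueness statement, but it is not a well-posed solution operator and should be stated as such.

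The paper takes a different route that sidesteps both issues. It performs Runge approximation for the \emph{known} background operator $L_{\Cz}$ (not $L_C$), uses this together with $\dn{\gam_1}{C}-\dn{\gam_1}{\Cz}$ to determine the single-layer operator $S^{\sig_2}$ for $L_C$, and then invokes the algebraic identity $(S^{\sig_2})^{-1}=\dn{\sig_2}{C}-\dn{\sig_2,+}{C}$, where the exterior map $\dn{\sig_2,+}{C}$ is the DN map for $D_1$ and is known because $C|_{D_1}$ is known. This yields $\dn{\sig_2}{C}$ directly, hence $\dn{\gam_2}{C}$. Your approach can be repaired (replace $\gam_2$ by $\sig_2$ throughout and make the UCP step explicit), but as written it does not go through.
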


\subsection{Discussion of notation}

Before proceeding with the proof we will clarify the assumptions we use and introduce some notations. First consider case {\it i)}. We assume $\dom_2\subset \dom_1\subset{\Bbb R}^3$ are Lipschitz domains, $D_1=\dom_1\setminus\bar\dom_2$ is connected and not empty. We define and $\Sigma_1:=\partial\dom_1\setminus\partial\dom_2$, $\Sigma_2:=\partial\dom_2\setminus\partial\dom_1$. $\gam_1\subset\sig_1$ and $\gam_2\subset\sig_2$ are non-empty, open, and curved in the sense of case {\it i)}.  We will assume we have elastic tensors $C^0$ and $C=C^0+\chi_{\dom_2}C^1$, that are piecewise constant on $\bar\dom_1$. We will use the \DN maps $\dn{\gam_1}{\Cz}$ and $\dn{\gam_1}{C}$ maps defined for the elasticity problem in the domain $\dom_1$ and $\dn{\sig_2}{C}$  defined for the elasticity problem in $\dom_2$. 
 Of course, knowledge of $\dn{\sig_2}{C}$ implies the knowledge of $\dn{\gam_2}{C}$. 

In case {\it ii)} we would like to change our notation a little. Let $P_1\in \gam_1$, $P_2\in \gam_2$. Let $g:[0,1]\to\bar D_1$ be a smooth path such that $g(0)=P_1$, $g(1)=P_2$, $g((0,1))\subset D_1$, $g'(0)\perp\gam_1$, $g'(1)\perp\gam_2$. For $\epsilon>0$ we define a tubular neighborhood $V_\epsilon$ of this path by
\begin{equation}
V_\epsilon=\{ x\in D_1: dist(x,g([0,1]))<\epsilon\}.
\end{equation}
We may clearly choose $g$ and $\epsilon$ so that 
\begin{equation}
\bar V_\epsilon\cap\gam_1=\{x\in\gam_1:dist(x,P_1)\leq\epsilon\},
\end{equation}
\begin{equation}
\bar V_\epsilon\cap\gam_2=\{x\in\gam_2:dist(x,P_2)\leq\epsilon\},
\end{equation}
and also that $\bar V_\epsilon\setminus(\gam_1\cup\gam_2)\subset D_1$ and $ V_\epsilon$ has Lipschitz boundary. 

Without loss of generality, we can redefine $\gam_1$ and $\gam_2$ to be   $\{x\in\pr D_1:dist(x,P_1)<\epsilon\}$ and $\{x\in\pr D_1:dist(x,P_2)<\epsilon\}$ respectively. We will also redefine $D_1$ to be $V_\epsilon$ and $\dom_2=\dom_1\setminus\bar V_\epsilon$. In this case $\sig_1=\gam_1$ and $\sig_2$ is Lipschitz and contains $\gam_2$.

Finally, in either case it is important to remark here that the unique continuation property of solutions ({\it UCP}) holds for
$L_{C^0}$ and $L_{C}$ using the Holmgren uniqueness theorem and regularity
up to the boundary and interfaces.

\subsection{Arguments and lemmas}

The inner extension of DN map has been already discussed in \cite{ikehata} for the conductivity equation when $\overline{\Omega_2}\subset\Omega_1$. We can easily adap the argument given there to our case by modifying the so called Runge's approximation. The arguments will be divided into several lemmas. 
%We will not give a proof to the lemma if it can be proved in the same way as in \cite{ikehata}. 

For any $F\in H^{-1}(\dom_1)=(H_0^1(\dom_1))^\ast$ we define $G_0F:=U_0$, where $U$ is such that
\begin{equation}
L_\Cz U_0=-F\quad \text{in }\dom_1,\quad
U_0\in H_0^1(\dom_1).
\end{equation}
We also define $GF=G_0F+W(F)$ where 
\begin{equation}
L_C W=-L_{\chi_{\dom_2}C^1}U_0\quad \text{in }\dom_1,\quad
W\in H_0^1(\dom_1).
\end{equation}
Note that if Green's functions existed for the operators $L_\Cz$, $L_C$ on $\dom_1$, they would be the Schwartz kernels of the operators $G_0$ and $G$.

For $f\in\hbi(\sig_2)$ we define $T_f\in H^{-1}(\dom_1)$ by 
\begin{equation}
T_f(\phi)=\la f,\phi|_{\sig_2}\ra,\quad\forall\phi\in H^1_0(\dom_1).
\end{equation}
Note that is ok since $\phi|_{\sig_2}\in\hb(\sig_2)$.

We define the single layer operator $S^{\sig_2}$ as a bounded linear operator $S^{\sig_2}:\hbi(\sig_2)\to\hb(\sig_2)$ defined by
$S^{\sig_2} f=U|_{\sig_2}$, where
\begin{equation}
L_C U=-T_f\quad \text{in }\dom_1,\quad
U\in H_0^1(\dom_1).
\end{equation}
Note that that for any $f,h\in\hbi(\sig_2)$ we have
\begin{equation}
\la h, S^{\sig_2}f\ra=T_h(U)=\la h, (G T_f)|_{\sig_2}\ra=T_h(GT_f).
\end{equation}

%\begin{lem}\label{prop1}
%For $F\in H^{-1}(\dom_1)$, $f\in\hbi(\sig_2)$, with the definitions in the above two paragraphs,
%\begin{equation}
%F(U)=\la f, (GF)|_{\sig_2}\ra.
%\end{equation}
%\end{lem}

%\vskip 0.5 cm
%A consequence of this lemma is that for any $f,h\in\hbi(\sig_2)$ we have
%\begin{equation}
%\la h, S^{\sig_2}f\ra=T_h(U)=\la f, (G T_h)|_{\sig_2}\ra=T_f(GT_h).
%\end{equation}

\begin{lem}\label{runge}
Let $u\in H^1_0(\dom_1)$ be such that $L_\Cz u=0$ in an open set $V$, $\dom_2\subset V\subset\dom_1$, then there exists a sequence $u_l\in H_{\gam_1}^1(\dom)$ such that $L_\Cz u_l=0$ in $\dom_1$, $\supp u_l|_{\partial \dom_1}\subset\gam_1$, $u_l\to u$ in $H^1(\dom_2)$.
\end{lem}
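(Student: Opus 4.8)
The plan is to run the Runge-type duality (Hahn--Banach) argument used in \cite{ikehata}; the point of the adaptation is that it needs only the well-defined operator $G_0$ and never a Green function, and that it must be set up so as to tolerate $\overline{\dom_2}$ meeting $\pr\dom_1$. Let $\mathcal R\subset H^1(\dom_2)$ be the closure of the linear space of restrictions $w|_{\dom_2}$ with $w\in H^1(\dom_1)$, $L_\Cz w=0$ in $\dom_1$, $\supp w|_{\pr\dom_1}\subset\gam_1$; the claim is exactly that $u|_{\dom_2}\in\mathcal R$. Suppose not. By Hahn--Banach there is $F\in(H^1(\dom_2))^\ast$ with $F|_{\mathcal R}=0$ and $F(u|_{\dom_2})\neq0$. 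Set $\tilde F(\phi):=F(\phi|_{\dom_2})$; this defines an element of $H^{-1}(\dom_1)$ (and also a functional on all of $H^1(\dom_1)$), and I let $\psi:=G_0\tilde F\in H^1_0(\dom_1)$, so that $\int_{\dom_1}D\phi:(\Cz::D\psi)=F(\phi|_{\dom_2})$ for all $\phi\in H^1_0(\dom_1)$. Since the right-hand side vanishes when $\supp\phi\subset D_1=\dom_1\setminus\overline{\dom_2}$, the function $\psi$ solves the constant-coefficient system $L_\Cz\psi=0$ in $D_1$, hence is real-analytic there.

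The key step is to show that $\psi$ has vanishing Cauchy data on $\gam_1$. Fix $\omega\in C_{co}(\gam_1)$, let $w_\omega\in H^1(\dom_1)$ solve $L_\Cz w_\omega=0$ in $\dom_1$ with $w_\omega|_{\pr\dom_1}=\omega$, and choose an extension $E\omega\in H^1_{\gam_1}(\dom_1)$ of $\omega$ whose support is disjoint from $\overline{\dom_2}$; this is possible because $\supp\omega$ is a compact subset of $\gam_1\subset\sig_1$ and $\sig_1$ is disjoint from $\overline{\dom_2}$. Then $w_\omega-E\omega\in H^1_0(\dom_1)$, so $\int_{\dom_1}D(w_\omega-E\omega):(\Cz::D\psi)=F\big((w_\omega-E\omega)|_{\dom_2}\big)=F(w_\omega|_{\dom_2})=0$, using $E\omega|_{\dom_2}=0$ and $w_\omega|_{\dom_2}\in\mathcal R$. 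On the left, Green's identity on $\dom_1$ together with the formal self-adjointness of $L_\Cz$ (which holds by the major symmetry $\Cz_{ijkl}=\Cz_{klij}$) and $\psi|_{\pr\dom_1}=0$ give $\int_{\dom_1}Dw_\omega:(\Cz::D\psi)=0$, while Green's identity on $D_1$ (where $L_\Cz\psi=0$, and where $E\omega$ is supported) gives $\int_{\dom_1}D(E\omega):(\Cz::D\psi)=\la(\Cz::D\psi)n,\omega\ra_{\gam_1}$, since the boundary term survives only on $\gam_1$, where the trace of $E\omega$ equals $\omega$. (Every conormal derivative here is a legitimate element of the relevant $H^{-1/2}$-type space, as the function it is paired against solves $L_\Cz(\cdot)=0$ near the boundary piece in question.) Hence $\la(\Cz::D\psi)n,\omega\ra_{\gam_1}=0$ for all $\omega\in C_{co}(\gam_1)$; by density of $C_{co}(\gam_1)$ in $\hb(\gam_1)$ and $(\Cz::D\psi)n|_{\gam_1}\in\hbi(\gam_1)$ this forces $(\Cz::D\psi)n|_{\gam_1}=0$, and together with $\psi|_{\pr\dom_1}=0$ we conclude that $\psi$ has zero Cauchy data on $\gam_1$.

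The unique continuation property now finishes the argument. Since $\psi$ solves a strongly elliptic constant- (hence analytic-) coefficient system on the connected set $D_1$ and has zero Cauchy data on the open Lipschitz patch $\gam_1\subset\pr D_1$, extending $\psi$ by zero across $\gam_1$ yields an $H^1$-solution of $L_\Cz(\cdot)=0$ near $\gam_1$ which vanishes on an open set, so by Holmgren's theorem $\psi\equiv0$ on $D_1$. Then the zero-extension of $\psi$ to $\R^3$ is supported in $\overline{\dom_2}$, whence $\psi|_{\dom_2}\in H^1_0(\dom_2)$, i.e. $\psi|_{\pr\dom_2}=0$. Feeding $u\in H^1_0(\dom_1)$ into the weak equation for $\psi$, using $D\psi\equiv0$ on $D_1$ to reduce the integral to $\dom_2$, and then applying Green's identity on $\dom_2$ with $L_\Cz u=0$ in $\dom_2\subset V$, the self-adjointness of $L_\Cz$, and $\psi|_{\pr\dom_2}=0$, we obtain $F(u|_{\dom_2})=\int_{\dom_1}Du:(\Cz::D\psi)=0$, contradicting the choice of $F$. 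Hence $u|_{\dom_2}\in\mathcal R$, and a sequence of admissible $w$'s converging to $u$ in $H^1(\dom_2)$ is the desired $(u_l)$.

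The technical heart is the unique continuation step for the anisotropic elasticity system across the curved boundary patch $\gam_1$: this is exactly where one invokes the regularity of $H^1$-solutions up to $\pr D_1$ (and, in the inductive/global picture, across the interfaces of $\Cz$), as flagged at the end of the discussion of notation above. Everything else is routine duality and trace-space bookkeeping; the one ingredient that deserves explicit attention is the choice, in each of the two geometric settings, of the extension $E\omega$ with support away from $\overline{\dom_2}$, which is what lets one avoid pairing $\tilde F$ against functions outside $H^1_0(\dom_1)$.
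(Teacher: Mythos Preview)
Your argument is correct but takes a genuinely different route from the paper's. The paper enlarges the domain to an $\dom_0\supset\dom_1$ with $\pr\dom_1\setminus\pr\dom_0=\gam_1$, defines the approximating class $Y=\{G_{\dom_0}F|_{\dom_2}:\supp F\subset\dom_0\setminus\overline{\dom_1}\}$, and shows $Y$ is dense in $X=\{v|_{\dom_2}:v\in H^1_0(\dom_1),\,L_\Cz v=0\text{ in }V\}$; the dual object $G_{\dom_0}\tilde f$ then vanishes on the open set $\dom_0\setminus\overline{\dom_1}$ and is a solution in $\dom_0\setminus\overline{\dom_2}$, so an \emph{interior} Holmgren-type UCP propagates the vanishing into $D_1$ without ever computing a boundary Neumann trace. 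You instead stay in $\dom_1$, build $\psi=G_0\tilde F\in H^1_0(\dom_1)$, and obtain the vanishing of $(\Cz::D\psi)n$ on $\gam_1$ by a test-function argument using the special extension $E\omega$ supported away from $\overline{\dom_2}$; the UCP you need is then the ``zero Cauchy data on a boundary patch implies zero extension is a solution'' version. Both are standard Runge--Lax dualities: the paper's enlargement trick trades the extra step of defining and handling the conormal trace of $\psi$ for the (trivial, since $\Cz$ is constant) extension of $\Cz$ to $\dom_0$, and reduces everything to interior UCP; your approach is more self-contained in $\dom_1$ but requires the careful boundary bookkeeping you carried out (the choice of $E\omega$ with $\supp E\omega\cap\overline{\dom_2}=\emptyset$, possible because $\gam_1\subset\Sigma_1$, is indeed the point).
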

\begin{proof}
Let $\dom_0$ be an open domain such that $\dom_1\subset\dom_0$, $\pr\dom_1\setminus\pr\dom_0=\gam_1$.
%First we expand the domain $\dom_1$. Let $P\in\gam_1$ and consider a ball $B(P,r)$ such that $B(P,r)\cap\partial\dom_1\subset\gam_1\setminus\overline V$ and $\dom_0:=\dom_1\cup B(P,r)$ is a Lipschitz domain. 

For $F\in H^{-1}(\dom_0)$ let $G_{\dom_0}F=u_0$, where $u_0\in H^1_0(\dom_0)$ solves $L_\Cz u_0=-F$ in $\dom_0$. We define two subspaces of $H_{\sig_2}^1(\dom_2)$:
\begin{equation}
X:=\left\{v|_{\dom_2}:v\in H^1_0(\dom_1), L_\Cz v=0\text{ in }V   \right\}
,
\end{equation}
\begin{equation}
Y:=\left\{ G_{\dom_0} F|_{\dom_2}: F\in H^{-1}(\dom_0),\supp F\subset\dom_0\setminus\overline{\dom_1}  \right\}
.
\end{equation}
It is enough to show that $Y$ is dense in $X$ with respect to $H_{\sig_2}^1(\dom_2)$. By the Hahn--Banach theorem, this is equivalent to showing that if $f\in (H_{\sig_2}^1(\dom_2))^\ast$ is such that $f(G_{\dom_0}F|_{\dom_2})=0$ for all $F\in H^{-1}(\dom_0)$, $\supp F\subset\dom_0\setminus\overline{\dom_1}$, then $f=0$ on $X$.

Define $\tilde f\in H^{-1}(\dom_0)$ by $\tilde f(\phi)=f(\phi|_{\dom_2})$ for any $\phi\in H^1_0(\dom_0)$. For any $F$ as given right above, we have
\begin{multline}
0=f(G_{\dom_0}F|_{\dom_2})=\tilde f(G_{\dom_0}F)
=\int_{\dom_0} D(G_{\dom_0}\tilde f):\big(\Cz::D(G_{\dom_0}F)\big)\\=F(G_{\dom_0}\tilde f).
\end{multline}
It then follows that $G_{\dom_0}\tilde f=0$ in $\dom_0\setminus\overline{\dom_1}$. Since $L_\Cz G_{\dom_0}\tilde f=0$ in $\dom_0\setminus\overline{\dom_2}$, by the unique continuation property for $L_\Cz$ we have that $G_{\dom_0}\tilde f=0$ in $\dom_1\setminus\bar\dom_2$.

Let $v|_{\Omega_2}\in X$ and let $\tilde v\in H^1_0(\dom_0)$ be the zero extension of $v$ to $\dom_0$. Then
\begin{multline}
f(v|_{\dom_2})=\tilde f(\tilde v)=\int_{\dom_0}D(G_{\dom_0}\tilde f):(\Cz::D\tilde v)\\
=\int_{\dom_2}D(G_{\dom_0}f):(\Cz::Dv)=0.
\end{multline}
\end{proof}

\begin{lem}\label{prop2}
Assuming $\Cz$ and $\dn{\gam_1}{C}$ are known, then $H(GF)$ can be determined for any $F,H\in H^{-1}(\dom_1)$ with $\supp F,\supp H\subset\dom_1\setminus\overline{\dom_2}$.
\end{lem}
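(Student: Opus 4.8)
The plan is to combine the decomposition $GF=G_0F+W(F)$ introduced above with a modified Runge approximation. Since $\Cz$ is assumed known, $G_0$ is a known operator, so $\la H,G_0F\ra$ is determined at once and the entire content of the lemma is to recover $\la H,W(F)\ra$. The first step is to record the algebraic identity
\[
\la H,W(F)\ra=-\int_{\dom_2}D(G_0H):\big(C^1::D(GF)\big),
\]
which comes from testing the weak formulation defining $W(F)$ against $G_0H$, testing the one defining $G_0H$ against $W(F)$, and using $\Cz=C-\chi_{\dom_2}C^1$. This trades the unknown $\la H,W(F)\ra$ for an integral over $\dom_2$ that still involves the unknown perturbation $C^1$, the known $\Cz$-solution $G_0H$, and the unknown $C$-solution $GF$.

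The second ingredient is the standard ``difference of Dirichlet-to-Neumann maps'' identity: if $p$ solves $L_Cp=0$ in $\dom_1$ with $p|_{\pr\dom_1}=\alpha$ supported in $\gam_1$, and $q$ solves $L_{\Cz}q=0$ in $\dom_1$ with $q|_{\pr\dom_1}=\beta$ supported in $\gam_1$, then
\[
\int_{\dom_2}Dp:\big(C^1::Dq\big)=\big\la\,\dn{\gam_1}{C}\alpha-\dn{\gam_1}{\Cz}\alpha\,,\ \beta\,\big\ra .
\]
This follows by inserting $C=\Cz+\chi_{\dom_2}C^1$, using that $p$ minus the $\Cz$-solution with boundary data $\alpha$ lies in $H^1_0(\dom_1)$, and integrating by parts; since $\Cz$ is known, $\dn{\gam_1}{\Cz}$ is known, so the right-hand side is computable from $\Cz$ and $\dn{\gam_1}{C}$.

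To bridge the two identities one realizes $G_0H$ and $GF$, on $\dom_2$, as $H^1(\dom_2)$-limits of boundary-generated solutions. Here the hypothesis $\supp F\subset\dom_1\setminus\overline{\dom_2}$ is exactly what is needed: it gives $L_{\Cz}(G_0F)=-F=0$ on the open set $V:=\dom_1\setminus\supp F$, which contains $\dom_2$, so Lemma \ref{runge} produces $u_l\in H^1_{\gam_1}(\dom_1)$ with $L_{\Cz}u_l=0$ in $\dom_1$, $\supp u_l|_{\pr\dom_1}\subset\gam_1$, and $u_l\to G_0F$ in $H^1(\dom_2)$. Writing $\omega_l:=u_l|_{\pr\dom_1}$ and letting $\tilde u_l$ be the $L_C$-solution with the same boundary data, one checks that $\tilde u_l-u_l\in H^1_0(\dom_1)$ solves $L_C(\tilde u_l-u_l)=-L_{\chi_{\dom_2}C^1}u_l$; comparing with the equation $L_CW(F)=-L_{\chi_{\dom_2}C^1}G_0F$, the bound $\|L_{\chi_{\dom_2}C^1}(u_l-G_0F)\|_{H^{-1}(\dom_1)}\le c\|u_l-G_0F\|_{H^1(\dom_2)}$ together with well-posedness of $L_C^{-1}\colon H^{-1}(\dom_1)\to H^1_0(\dom_1)$ give $\tilde u_l\to G_0F+W(F)=GF$ in $H^1(\dom_2)$. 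The same construction applied to $H$ yields $\Cz$-solutions $v_m$ with boundary data $\eta_m$ supported in $\gam_1$ and $v_m\to G_0H$ in $H^1(\dom_2)$. Since $(\phi,\psi)\mapsto\int_{\dom_2}D\phi:(C^1::D\psi)$ is continuous on $H^1(\dom_2)\times H^1(\dom_2)$, the three displays combine into
\[
H(GF)=\la H,G_0F\ra-\lim_{m}\lim_{l}\big\la\,\dn{\gam_1}{C}\omega_l-\dn{\gam_1}{\Cz}\omega_l\,,\ \eta_m\,\big\ra ,
\]
each term of which is determined by $\Cz$ and $\dn{\gam_1}{C}$.

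The step I expect to be the crux is the bridging one. The functions $G_0H$ and $GF$ satisfy inhomogeneous equations, so they carry no Cauchy data on $\gam_1$ to which the difference-of-DN-maps identity could be applied directly, and they must be replaced, in $H^1(\dom_2)$, by boundary-data-driven solutions. Upgrading the Runge convergence $u_l\to G_0F$ to $\tilde u_l\to GF$ is where the interface perturbation estimate is essential, and one must keep track that every convergence takes place in $H^1(\dom_2)$ --- precisely the space on which the bilinear form carrying $C^1$ is bounded. Note that no separate reduction to compactly supported $F,H$ is needed: the argument only uses $\supp F\cap\overline{\dom_2}=\emptyset$, which is automatic from the hypothesis.
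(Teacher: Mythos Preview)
Your argument is correct and is essentially the paper's own proof, reorganized. The paper also applies Lemma \ref{runge} to $U_0=G_0F$ and $V_0=G_0H$, upgrades the $F$-side approximants to $L_C$-solutions $Z_l$ (your $\tilde u_l$), and identifies $H(W)$ with the limit of $\la(\dn{\gam_1}{C}-\dn{\gam_1}{\Cz})U_l|_{\pr\dom_1},V_l|_{\pr\dom_1}\ra$; the only cosmetic differences are that the paper runs a single index $l$ for both Runge sequences and matches the two sides term-by-term rather than first packaging them into your identity $\la H,W(F)\ra=-\int_{\dom_2}D(G_0H):(C^1::D(GF))$.
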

\begin{proof}
Let $U_0,V_0\in H^1_0(\dom_1)$ be such that $L_\Cz U_0=-F$, $L_\Cz V_0=-H$. By lemma \ref{runge} there exist $U_l,V_l\in H^1_0(\dom_1)$, $\supp U_l|_{\partial\dom_1},\supp V_l|_{\partial\dom_1}\subset\Gamma_1$ such that $U_l\to U_0$, $V_l\to V_0$ in $H_{\sig_2}^1(\dom_2)$.

Let $Z_l\in H^1(\dom_1)$ be the solution of 
\begin{equation}
L_C Z_l=0\quad \text{in }\dom_1,\quad
Z_l|_{\partial\dom_1}=U_l|_{\partial\dom_1}.
\end{equation}
and let $W_l:=Z_l-U_l\in H^1_0(\dom_1)$. Then $L_C W_l=-L_{\chi\dom_2C^1}U_l$ in $\dom_1$ and $W_l\to W=GF-G_0F$ in $H^1_0(\dom_1)$.

We have that 
\begin{multline}
\la (\dn{\gam_1}{C}-\dn{\gam_1}{\Cz})U_l|_{\partial\dom_1},V_l|_{\partial\dom_1}\ra
=\int_{\dom_1} DZ_l:(C-\Cz):DV_l\\
=\int_{\dom_2} DU_l:C^1:DV_l+\int_{\dom_2}DW_l:C^1:DV_l\\
\to \int_{\dom_2} DU_0:C^1:DV_0+\int_{\dom_2}DW:C^1:DV_0.
\end{multline}
On the other hand
\begin{multline}
H(W)=\int_{\dom_1} DV_0:\Cz:DW
=\int_{\dom_1} DV_0:(C-\chi_{\dom_2}C^1):D W\\
=\int_{\dom_1} DV_0:C:DW-\int_{\dom_2}DV_0:C^1:DW\\
=-\int_{\dom_2} DU_0:C^1:DV_0-\int_{\dom_2} DV_0:C^1:DW.
\end{multline}
Therefore
\begin{equation}
H(GF-G_0F)=\lim_{l\to\infty}\la (\dn{\gam_1}{C}-\dn{\gam_1}{\Cz})U_l|_{\partial\dom_1},V_l|_{\partial\dom_1}\ra.
\end{equation}
\end{proof}

\begin{lem}\label{s-det}
Assuming $\Cz$ and $\dn{\gam_1}{C}$ are known, we can determine $S^{\sig_2}$.
\end{lem}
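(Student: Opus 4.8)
The plan is to determine the bilinear form $(f,h)\mapsto\la h,S^{\sig_2}f\ra$ on $\hbi(\sig_2)\times\hbi(\sig_2)$; since the $\hb(\sig_2)$--$\hbi(\sig_2)$ pairing is nondegenerate, this recovers the operator $S^{\sig_2}$ itself. Recall that $\la h,S^{\sig_2}f\ra=T_h(GT_f)$, so the point is to show each such number is computable from $\Cz$ and $\dn{\gam_1}{C}$. The difficulty is that $T_f$ and $T_h$ are supported on $\overline{\sig_2}\subset\pr\dom_2$, so Lemma \ref{prop2} does not apply to them directly; I will get around this by approximating $T_f$ and $T_h$ in $H^{-1}(\dom_1)$ by distributions supported in $D_1=\dom_1\setminus\overline{\dom_2}$.

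First I would reduce to $f,h$ smooth and compactly supported in $\sig_2$. This is legitimate: $f\mapsto T_f$ is bounded from $\hbi(\sig_2)$ into $H^{-1}(\dom_1)$ by the trace theorem onto the interior surface $\sig_2$; $G$ is bounded from $H^{-1}(\dom_1)$ into $H_0^1(\dom_1)$, being the solution operator for $L_C$ with zero Dirichlet data (one checks $L_CGF=-F$ directly from the definitions of $G_0$ and $G$); hence $(f,h)\mapsto T_h(GT_f)$ is continuous on $\hbi(\sig_2)\times\hbi(\sig_2)$, and smooth compactly supported functions are dense in $\hbi(\sig_2)$.

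The key step is a layer-pushing approximation. Let $\nu$ be the outward unit normal field of $\dom_2$ along $\sig_2$; since $\sig_2\subset\dom_1$, the field $\nu$ points into $D_1$. For small $\delta>0$ put $\Phi_\delta(x)=x+\delta\nu(x)$, and shrink $\supp f$ to a compact subset $\sig_2'$ of $\sig_2$; then $\Phi_\delta$ maps $\sig_2'$ diffeomorphically onto a surface $\sig_2^\delta$ contained in $D_1$. Transport $f$ along $\Phi_\delta$ to a density $f_\delta$ on $\sig_2^\delta$ and set $F_\delta:=T_{f_\delta}$, so that $\supp F_\delta\subset D_1$. I claim $F_\delta\to T_f$ in $H^{-1}(\dom_1)$: for $\phi\in H_0^1(\dom_1)$ write $\phi(\Phi_\delta(x))-\phi(x)=\delta\int_0^1\nu(x)\cdot\nabla\phi\big(x+t\delta\nu(x)\big)\dd t$, estimate the resulting integral over the thin collar $\{x+t\delta\nu(x):x\in\sig_2',\ 0\le t\le1\}$ by the coarea formula, and absorb the Jacobian factors of $\Phi_\delta$ (which are $1+O(\delta)$); this gives $|\la F_\delta-T_f,\phi\ra|\le C\delta^{1/2}\|f\|_{L^2(\sig_2)}\|\phi\|_{H^1(\dom_1)}$, hence $\|F_\delta-T_f\|_{H^{-1}(\dom_1)}\to0$. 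Construct $H_\delta\to T_h$ supported in $D_1$ in the same way.

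By Lemma \ref{prop2} each number $H_\delta(GF_\delta)$ is determined by $\Cz$ and $\dn{\gam_1}{C}$. Since $\|GF_\delta\|_{H^1(\dom_1)}\le C\|F_\delta\|_{H^{-1}(\dom_1)}$ stays bounded while $\|F_\delta-T_f\|_{H^{-1}}\to0$ and $\|H_\delta-T_h\|_{H^{-1}}\to0$, the identity
\[
H_\delta(GF_\delta)-T_h(GT_f)=(H_\delta-T_h)(GF_\delta)+T_h\big(G(F_\delta-T_f)\big)
\]
shows $H_\delta(GF_\delta)\to T_h(GT_f)=\la h,S^{\sig_2}f\ra$, so $\la h,S^{\sig_2}f\ra$ is determined, and therefore so is $S^{\sig_2}$. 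The step needing the most care is the $H^{-1}(\dom_1)$-norm convergence $F_\delta\to T_f$: pointwise convergence of the functionals, hence weak convergence, is immediate, but the norm estimate requires the collar/coarea bound above together with the reduction to smooth $f$, so that the transported densities $f_\delta$ are honest $L^2$ functions and the geometry near $\pr\sig_2$ causes no trouble.
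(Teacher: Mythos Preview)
Your proof is correct and follows essentially the same strategy as the paper: reduce to a dense class of densities, push the single-layer functional $T_f$ off $\sig_2$ into $D_1$ so that Lemma~\ref{prop2} applies, prove the $H^{-1}(\dom_1)$ convergence via the same $\delta^{1/2}$ collar estimate, and pass to the limit in the bilinear form. The only cosmetic difference is that the paper flattens $\sig_2$ in local graph coordinates and uses a partition of unity with $L^2$ densities, whereas you work with the global normal map $\Phi_\delta$ and smooth compactly supported $f$; for merely Lipschitz $\sig_2$ the normal need not be continuous, so the paper's graph-coordinate version is slightly cleaner, but the content is the same.
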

\begin{proof}
Since $L_{co}^2(\Sigma_2):=\{f\in L^2(\partial\Omega_2):\,\text{supp}f\subset\Sigma_2\}$ is dense in $H_{co}^{-1/2}(\Sigma_2)$ and $S^{\Sigma_2}$
is bounded linear, we only need to show that $S^{\Sigma_2}f$ can be computed for 
any $f\in L_{co}^2(\Sigma_2)$.
Around any point $P\in\sig_2$ we can find coordinates such that locally $\sig_2$ is $\{x_3=0\}$ and $\dom_1\setminus\overline{\dom_2}$ is $\{x_3>0\}$. Let $f\in L^2(\sig_2)$ be supported in this local coordinate patch. Define $F_\epsilon\in H^{1}(\dom_1)$ by 
\begin{equation}
F_\epsilon(\phi)=\la f,\phi_{x_3=\epsilon}\ra=\int f(x')\phi(x',\epsilon) d x',
\end{equation}
for any $\phi\in H^1_0(\dom_1)$. Then
\begin{equation}
|F_\epsilon(\phi)-T_f(\phi)|=
=\left|\int_{0\leq x_3\leq\epsilon} f(x')\partial_3\phi(x',x_3)\right|
\leq ||\phi||_{H^1_0(\dom_1)}||f||_{L^2(\sig_2)}\epsilon^{1/2}.
\end{equation}
Using a partition of unity argument we have that for any $f\in L^2(\partial\dom_2)$ with $\supp f\subset\sig_2$ we can construct $F_n\in H^{-1}(\dom_1)$ such that $F_n\to T_f$ in $H^{-1}(\dom_1)$.

Let $f,\,h \in L_{co}^2(\Sigma_2)$. Since we have that 
$\la h, S^{\sig_2}f\ra=T_f(GT_h)$ and we can approximate $T_f$ and $T_h$ by $F_n$, $H_n$ with $\supp F_n, \supp H_n\subset\dom_1\setminus\overline{\dom_2}$, by Lemma \ref{prop2} we obtain the desired conclusion.\qedhere
\end{proof}

Now let $\dn{\sig_2,+}{C}$ be the local \DN map associated to the elasticity problem in $\dom_1\setminus\overline{\dom_2}$ with boundary data supported in $\sig_2$. That is, if $f\in\hb(\sig_2)$ and $u^+\in H^1(\dom_1\setminus\overline{\dom_2})$ is the solution of 
\begin{equation}
L_C u^+=0\quad \text{in }\dom_1\setminus\bar\dom_2,\quad
u_+|_{\partial(\dom_1\setminus\bar\dom_2)}=f,
\end{equation}
then for any $v\in H^1(\dom_1\setminus\overline{\dom_2})$ such that $v|_{\partial(\dom_1\setminus\overline{\dom_2})}=h\in\hb(\sig_2)$, 
\begin{equation}
\la\dn{\sig_2,+}{C}f, h\ra=-\int_{\dom_1\setminus\overline{\dom_2}}Du^+:C:Dv.
\end{equation}

\begin{lem}\label{formulae for DN maps}${}$
\newline
{\rm (i)} $\dn{\sig_2}{C}-\dn{\sig_2,+}{C}$ is injective.
\newline
{\rm (ii)} For any $f\in\hbi(\sig_2)$, $(\dn{\sig_2}{C}-\dn{\sig_2,+}{C})S^{\sig_2}f=f$.
\end{lem}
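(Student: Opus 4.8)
The plan is to realize both statements through the single layer potential $U:=GT_f$ and the jump of its conormal derivative across $\sig_2$. Recall that for $f\in\hbi(\sig_2)$ the function $U\in H^1_0(\dom_1)$ solves $L_CU=-T_f$ in $\dom_1$, i.e.
\begin{equation}
\int_{\dom_1}D\phi:(C::DU)=\la f,\phi|_{\sig_2}\ra\qquad\text{for all }\phi\in H^1_0(\dom_1).
\end{equation}
Since $T_f$ is supported on $\overline{\sig_2}$, the function $U$ satisfies $L_CU=0$ separately in $\dom_2$ and in $D_1=\dom_1\setminus\overline{\dom_2}$; it is continuous across $\sig_2$ with two-sided trace $S^{\sig_2}f$, and it vanishes on $\pr\dom_1$. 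By unique solvability of the Dirichlet problem for $L_C$ (coercivity of the bilinear form on $H^1_0$, which follows from the strong convexity condition together with Korn's inequality), $U|_{\dom_2}$ is exactly the solution whose conormal derivative defines $\dn{\sig_2}{C}(S^{\sig_2}f)$, and $U|_{D_1}$ is exactly the solution whose conormal derivative defines $\dn{\sig_2,+}{C}(S^{\sig_2}f)$; the minus sign in the definition of $\dn{\sig_2,+}{C}$ is precisely what makes both of these conormal derivatives be taken with the same orientation, namely the outer unit normal $n$ of $\dom_2$ along $\sig_2$.

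To prove (ii), I would split the integral in the displayed identity as $\int_{\dom_2}+\int_{D_1}$, integrate by parts in each piece using $L_CU=0$ there and $\phi|_{\pr\dom_1}=0$, and use that the outer normal of $D_1$ on $\sig_2$ equals $-n$. Writing $[(C::DU)n]$ for the jump across $\sig_2$ of the conormal derivative of $U$ (interior value minus exterior value, both with respect to $n$), this gives
\begin{equation}
\int_{\sig_2}\phi|_{\sig_2}\cdot[(C::DU)n]=\la f,\phi|_{\sig_2}\ra\qquad\text{for all }\phi\in H^1_0(\dom_1).
\end{equation}
Since the traces $\phi|_{\sig_2}$ of such $\phi$ are dense in $\hb(\sig_2)$, this says $[(C::DU)n]=f$ in $\hbi(\sig_2)$, and by the identification of the two one-sided conormal derivatives recalled above this jump is exactly $(\dn{\sig_2}{C}-\dn{\sig_2,+}{C})S^{\sig_2}f$. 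Hence (ii).

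For (i) I would run the same computation backwards. Suppose $g\in\hb(\sig_2)$ satisfies $(\dn{\sig_2}{C}-\dn{\sig_2,+}{C})g=0$. Let $u^{\mathrm{int}}$ solve $L_Cu^{\mathrm{int}}=0$ in $\dom_2$ with data $g$ on $\sig_2$ (and $0$ on the rest of $\pr\dom_2$), and let $u^{\mathrm{ext}}$ solve $L_Cu^{\mathrm{ext}}=0$ in $D_1$ with data $g$ on $\sig_2$ and $0$ on the rest of $\pr D_1$; glue them into a function $u$ on $\dom_1$, which lies in $H^1(\dom_1)$ because the two traces on $\sig_2$ agree. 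The hypothesis says precisely that the one-sided conormal derivatives of $u^{\mathrm{int}}$ and $u^{\mathrm{ext}}$ across $\sig_2$ coincide, so the same integration by parts as in (ii) yields $\int_{\dom_1}D\phi:(C::Du)=0$ for all $\phi\in H^1_0(\dom_1)$, i.e. $L_Cu=0$ in $\dom_1$. Since $u=0$ on $\pr\dom_1$ we have $u\in H^1_0(\dom_1)$, hence $u\equiv0$ by the uniqueness used above, and therefore $g=u^{\mathrm{int}}|_{\sig_2}=0$. This proves injectivity.

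I expect the only genuinely delicate points to be the bookkeeping of the two opposite normal orientations together with the sign convention built into $\dn{\sig_2,+}{C}$, and the standard removable-interface fact that two weak solutions with matching Dirichlet and conormal traces across a Lipschitz interface glue to a single weak solution on the larger domain; well-posedness of the Dirichlet problem for $L_C$ and the density of interior traces in $\hb(\sig_2)$ are routine and have already been used in the preceding lemmas.
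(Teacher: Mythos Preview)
Your proposal is correct and follows essentially the same route as the paper: for (ii) you split the defining identity for $U=GT_f$ over $\dom_2$ and $D_1=\dom_1\setminus\overline{\dom_2}$ and recognize the two pieces as the pairings defining $\dn{\sig_2}{C}(S^{\sig_2}f)$ and $\dn{\sig_2,+}{C}(S^{\sig_2}f)$, and for (i) you glue the interior and exterior solutions and use uniqueness for $L_C$ on $H^1_0(\dom_1)$. The only cosmetic difference is that the paper reads off the two DN pairings directly from their weak definitions rather than phrasing the step as an integration by parts producing a conormal jump; since those weak definitions already \emph{are} the bilinear forms, no separate integration by parts or density argument is needed beyond what is built into the definitions.
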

\begin{proof}
Although the proof is almost the same as in \cite{ikehata}, we will provide it to clarify that the DN maps can be localized here. We first prove (i). To begin with let $f\in\hb(\sig_2)$ be such that $(\dn{\sig_2}{C}-\dn{\sig_2,+}{C})f=0$. If $u\in H^{1}(\dom_2)$ solves
\begin{equation}
L_C u=0\quad \text{in }\dom_2,\quad
u|_{\partial\dom_2}=f,
\end{equation}
and $u^+$ is as above,  let
$
\tilde u:= \chi_{\dom_2}u+\chi_{\dom_1\backslash\dom_2}u^+.
$
Then $\tilde u\in H^1_0(\dom_1)$ and $L_C u=0$ in the whole $\dom_1$. It follows that $\tilde u\equiv 0$.

Next we prove (ii). For that let $\phi\in H^1_0(\dom_1)$. Then
\begin{multline}
\la f,\phi|_{\sig_2}\ra
=\int_{\dom_1-\bar\dom_2}D(GT_f):C:D\phi+\int_{\dom_2} D(GT_f):C:D\phi\\
=-\la \dn{\sig_2,+}{C}(S^{\sig_2}f),\phi|_{\sig_2}\ra+\la \dn{\sig_2}{C}(S^{\sig_2}f),\phi|_{\sig_2}\ra\\
=\la(\dn{\sig_2}{C}-\dn{\sig_2,+}{C})S^{\sig_2}f,\phi|_{\sig_2}\ra
\end{multline}
\end{proof}

From Lemma \ref{formulae for DN maps},  it follows that $\dn{\sig_2}{C}-\dn{\sig_2,+}{C}=(S^{\sig_2})^{-1}$ and hence combining this with Lemma \ref{s-det} and the observation that $\dn{\sig_2}{C}$ determines $\dn{\gam_2}{C}$, we have Proposition \ref{inner extension}.

\bigskip
\noindent
{\bf Acknowledgement} The work of this paper was initiated when the third author stayed in NCTS (National Center for Theoretical Science) of National Taiwan University, Taipei for 3 months with support during the year of 2016. Also he was partially supported by grant-in-aid for Scientific Research (15K21766 and 15H05740) of the Japan Society for the Promotion of Science for inviting the first author to Japan in order to work on the subject of this paper.  Subsequently the first author has traveled to Hokkaido University for 11 days, when further work was done towards this paper. The transportation costs were covered by NCTS, local support provided by Hokkaido University. We acknowledge all these support.

%\bibliography{aniso}
%\bibliographystyle{plain}
\end{document}